\newcommand{\RR}{{\mathbb R}}
\newcommand{\NN}{{\mathbb N}}
\newcommand{\EE}{{\mathbb E}}
\newcommand{\SF}{{\mathcal S}}
\newcommand{\RI}{{\mathcal R}}
\newcommand{\LL}{{\mathcal L}}
\numberwithin{equation}{section}
\theoremstyle{plain}
\newtheorem{theorem}{Theorem}[section]
\newtheorem{lemma}[theorem]{Lemma}
\newtheorem{corollary}[theorem]{Corollary}
\theoremstyle{definition}
\newtheorem{definition}[theorem]{Definition}
\def\ds{\displaystyle}
\begin{document}

\title{Integrals with values in Banach spaces and locally convex spaces}

\author{\small{Piotr Mikusi\'nski}\\
\small{University of Central Florida, Orlando, Florida}}
\date{}

\maketitle

\section{Introduction}

The purpose of this article is to present the construction and basic properties of the Bochner integral on abstract measure spaces.  The approach presented here is based on the ideas from \cite{JM} where the Bochner integral is presented for functions defined on $\mathbb{R}^N$. This method seems to be quite flexible and works well for a number of different constructions in analysis: the Lebesgue integral for functions on $\mathbb{R}^N$ (see \cite{JM}, as well as \cite{Hilbert}, \cite{JMPM}, and \cite{PMMT}), the Bochner integral for functions on $\mathbb{R}^N$  (see \cite{JM}), the Daniell integral (see \cite{Daniell}), and measures on abstract spaces in (see \cite{Measures}).  

In the last section present an extension of the construction to functions with values in a locally convex space.  An extension of the Bochner integral to arbitrary locally convex spaces has been recently presented at \cite{nets}.  It uses nets of simple functions to approximate integrable functions. It is not clear if that construction yields an isomorphic space of integrable functions.

\section{Simple functions}

Let $X$ be a nonempty set and let $(\EE,\|\cdot\|)$ be a Banach space.  

\begin{definition}
A collection $\RI$ of subsets of $X$ is called a {\it ring of subsets} of $X$ if
\begin{itemize}
 \item[] $A,B\in\RI$ implies $A \cup B, A \setminus B \in \RI$.
\end{itemize} 
\end{definition}

\begin{definition}
A map $\mu : \RI \to [0,\infty)$ is called {\it $\sigma$-additive} if for any sequence of disjoint sets $A_1, A_2, \dots \in \RI$ such that $\bigcup_{n=1}^\infty A_n \in \RI$ we have 
$$
\mu\left( \bigcup_{n=1}^\infty A_n \right) = \sum_{n=1}^\infty \mu (A_n).
$$
\end{definition}

Note that $\mu (A)$ is finite for all $A\in \RI$. In what follows we will assume that $\RI$ is a ring of subsets of $X$ and $\mu$ is a $\sigma$-additive measure on $\RI$. It will be convenient to use the same symbol to denote a subset of $X$ and the characteristic function of that set, that is, if $A\subset X$ we will write
$$
A(x) = \left\{ \begin{array}{ll}
1 & \mbox{if } x\in A\\
0 & \mbox{otherwise}\\
\end{array} \right. .
$$

\begin{definition}
 A function $f: X \to \EE$ is called a {\it simple function} if 
\begin{equation}\label{simple}
f=v_1 A_1 + \dots + v_n A_n
\end{equation}
for some $A_1, \dots , A_n\in \RI$ and $v_1, \dots v_n \in \EE$. The vector space of all simple $\EE$-valued functions will be denoted by $\SF(\EE)$. 
\end{definition}

If $f\in \SF(\EE)$ is a simple function, then the function $|f|$ defined by $|f|(x)=\|f(x)\|$ is a simple real valued function, that is, $|f|\in\SF(\RR)$. For the simple function 
$$
f=v_1 A_1 + \dots + v_n A_n,
$$
where $A_1, \dots , A_n\in \RI$ and $v_1, \dots v_n \in \EE$, 
we define
$$
\int f= v_1 \mu(A_1) + \dots + v_n \mu(A_n).
$$
A standard argument shows that this integral is well defined. It follows directly from the definition that the map $\int: \SF(\EE) \to \EE$ is a linear.  For $f\in \SF(\EE)$ we define
$$
\|f\|_1 = \int |f|.
$$
It is easy to see that $\| \cdot \|_1$ is a semi-norm on $\SF(\EE)$. 

\begin{lemma}\label{L1}  For any $A_1, \dots , A_n\in \RI$ and $v_1, \dots v_n \in \EE$ we have 
$$
\left\|v_1 \mu(A_1) + \dots + v_n \mu(A_n)\right\| \leq \int \left|v_1 A_1 + \dots + v_n  A_n \right|.
$$ 
\end{lemma}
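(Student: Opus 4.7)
The plan is to reduce to the case where the sum is written over pairwise disjoint sets, in which case the inequality reduces to the triangle inequality in $\EE$ combined with the definition of $\int |\cdot|$.

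First, I would form the Boolean atoms generated by $A_1, \dots, A_n$. Concretely, for each $S \subseteq \{1, \dots, n\}$, set
$$
B_S \,=\, \Bigl(\bigcap_{i\in S} A_i\Bigr) \setminus \Bigl(\bigcup_{i\notin S} A_i\Bigr).
$$
Since $\RI$ is closed under finite unions and set differences, each $B_S$ lies in $\RI$; the nonempty $B_S$ are pairwise disjoint; and each $A_i$ is the disjoint union of those $B_S$ with $i\in S$. Relabel the nonempty atoms as $B_1,\dots, B_m$.

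Next I would rewrite both sides of the inequality using this disjoint decomposition. Setting $w_j = \sum_{i\,:\, B_j\subseteq A_i} v_i$, one obtains $v_1 A_1 + \dots + v_n A_n = \sum_{j=1}^m w_j B_j$ pointwise. Because $\mu$ is (finitely) additive on $\RI$, we also have $\mu(A_i) = \sum_{j\,:\, B_j\subseteq A_i} \mu(B_j)$, and swapping the order of summation gives
$$
v_1\mu(A_1) + \dots + v_n \mu(A_n) \,=\, \sum_{j=1}^m w_j \mu(B_j).
$$
Since the $B_j$ are pairwise disjoint, the function $\bigl|\sum_j w_j B_j\bigr|$ equals $\sum_j \|w_j\| B_j$ pointwise, and this is a valid representation of this simple real-valued function; by the well-definedness of $\int$ on $\SF(\RR)$,
$$
\int\bigl|v_1 A_1+\dots+v_n A_n\bigr| \,=\, \sum_{j=1}^m \|w_j\|\,\mu(B_j).
$$

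The inequality then follows immediately from the triangle inequality in $\EE$ applied to $\sum_j w_j\mu(B_j)$ together with the nonnegativity of $\mu(B_j)$. The only delicate point, and the one I would take care to state explicitly, is that $\int$ is well defined on $\SF(\EE)$ and on $\SF(\RR)$, so that the two representations of the simple functions (as sums over the $A_i$ and as sums over the disjoint $B_j$) yield the same integral; this is the ``standard argument'' referenced just before the lemma, and it is what lets the reduction to disjoint supports actually compute both sides of the claimed inequality.
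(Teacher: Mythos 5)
Your proposal is correct and follows essentially the same route as the paper: pass to a representation over pairwise disjoint sets in $\RI$, then apply the triangle inequality and compute the integral of the absolute value termwise. The paper simply asserts the existence of the disjoint representation $u_1 B_1 + \dots + u_m B_m$ and the equality of the two integral expressions (deferring to the ``standard argument''), whereas you spell out the atom construction explicitly; the substance is identical.
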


\begin{proof} First we observe that
$$
v_1 A_1 + \dots + v_n A_n = u_1 B_1 + \dots + u_m B_m,
$$
for some $u_1, \dots , u_m\in \EE$ and some disjoint $B_1, \dots , B_m\in \RI$. Then
$$
v_1 \mu(A_1) + \dots + v_n\mu( A_n) = u_1 \mu(B_1) + \dots + u_m \mu(B_m),
$$
and
\begin{align*}
 \left\|v_1 \mu(A_1) + \dots + v_n \mu(A_n) \right\| &= \left\|u_1 \mu(B_1) + \dots +u_m \mu(B_m) \right\|\\
&\leq \left\|u_1\mu(B_1) \right\| + \dots + \left\|u_m \mu(B_m)\right\|\\
&= \|u_1\| \mu(B_1) + \dots + \|u_m\| \mu(B_m)\\
&= \int \left( \|u_1\|  B_1 + \dots + \|u_m\| B_m \right) \\
&= \int |u_1  B_1 + \dots +u_m B_m | \\
&= \int \left|v_1 A_1 + \dots + v_n  A_n \right|.
\end{align*}
\end{proof}

From the above lemma we obtain the following useful inequality
$$
\left\| \int f \right\| \leq \|f\|_1
$$
for all $f \in \SF(\EE)$.

\section{The Bochner Integral}

In this section we describe a construction of the Bochner integral on $X$ with respect to the extension of $\mu$ to a complete $\sigma$-additive measure on $X$.  In our approach, the extended measure does not play any role.  On the other hand, it is important that $(X, \RI, \mu)$ can be used to define the space of Lebesgue integrable real valued functions on $X$ with respect to the extension of $\mu$ to a complete $\sigma$-additive measure on $X$ (see \cite{Measures}). We will use $L^1(X, \mu)$ to denote that space.  In proofs involving arguments in $L^1(X, \mu)$  (for example, the proof of  Theorem \ref{Th|f|})  we follow the approach presented in \cite{Daniell}.  

\begin{definition}
A function $f: X \to \EE$ is called \emph{Bochner integrable} if
there exist functions $f_n\in \SF(\EE)$ such that
\begin{enumerate}
	\item[$\mathbb{A}$] $\ds \sum_{n=1}^\infty \|f_n\|_1 < \infty$,
	\item[$\mathbb{B}$] $\ds f(x) = \sum_{n=1}^\infty f_n(x)$ at those points $x\in X$ where $\ds \sum_{n=1}^\infty \|f_n(x)\| < \infty$.
\end{enumerate}

If conditions $\mathbb{A}$ and $\mathbb{B}$ are satisfied we will write 
\begin{equation*}
	f \simeq f_1+ f_2+\dots \quad \text{or} \quad f\simeq \sum_{n=1}^\infty f_n.
\end{equation*} 

The space of all Bochner integrable functions will be denoted by $L^1(X,\mu, \EE)$.
\end{definition}

We are going to define the integral of a Bochner integrable function $f \simeq f_1+ f_2+\dots$ as $\int f = \int f_1+ \int f_2+\dots$.  Note that this definition requires a proof of independence of the integral $\int f$ of a particular expansion of $f$ in a series of simple functions. This proof is not simple and uses some properties of the integral in  $L^1(X, \mu)$. 

\begin{theorem}\label{Th|f|} Let $f \simeq f_1+ f_2+\dots$.  Then
\begin{itemize}
 \item[{\rm (a)}] $|f|\in L^1(X,\mu)$,
 \item[{\rm (b)}] $\ds \int |f| = \lim_{n\to \infty} \int |f_1+ \dots + f_n|$,
 \item[{\rm (c)}] $\ds \left\| \int f_1+ \int f_2+\dots \right\| \leq \int |f|$.
\end{itemize}
\end{theorem}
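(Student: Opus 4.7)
My plan is to manufacture a Mikusinski representation of $|f|$ by telescoping the norms of the partial sums $s_n := f_1+\cdots+f_n$, extract $\int |f|$ from that representation, and then pass to the limit in the inequality $\|\int g\|\leq\|g\|_1$ that follows Lemma~\ref{L1}.

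For (a), I would set $s_0 := 0$ and $g_n := |s_n|-|s_{n-1}|\in \SF(\RR)$, aiming to show $|f|\simeq \sum g_n$. The reverse triangle inequality in $\EE$ gives the pointwise bound $|g_n(x)|\leq \|f_n(x)\|$, hence $\|g_n\|_1\leq \|f_n\|_1$ and $\sum\|g_n\|_1<\infty$, which is condition $\mathbb{A}$. For condition $\mathbb{B}$, at every $x$ with $\sum\|f_n(x)\|<\infty$ the series $\sum f_n(x)$ converges to $f(x)$ in $\EE$, so by continuity of the norm $|s_n(x)|\to \|f(x)\|=|f|(x)$, and the telescoping partial sums give $\sum g_n(x) = |f|(x)$. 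The set $N := \{x : \sum\|f_n(x)\|=\infty\}$ is $\mu$-null by the very Mikusinski-style machinery invoked in \cite{Daniell,Measures} for constructing $L^1(X,\mu)$; on the (null) exceptional subset of $N$ where $\sum|g_n(x)|$ happens to converge I would interlace a cancelling pair $\pm h_k$ of nonnegative simple functions with $\sum\|h_k\|_1<\infty$ and $\sum h_k(x)=+\infty$ on $N$ (such $h_k$ exist precisely because $N$ is null) so as to force condition $\mathbb{B}$ at every point of absolute convergence of the modified series.

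For (b), the definition of the integral on $L^1(X,\mu)$ via Mikusinski series, together with the telescoping identity $\sum_{k=1}^n \int g_k = \int|s_n|$, yields $\int|f| = \sum_{n=1}^\infty \int g_n = \lim_n \int|s_n|$; the $\pm h_k$ pairs integrate to zero in pairs and do not affect this computation. For (c), the estimate $\|\int f_n\|\leq \|f_n\|_1$ combined with condition $\mathbb{A}$ makes $\sum \int f_n$ absolutely convergent in the Banach space $\EE$; applying the same estimate to $s_n$ gives $\|\int s_n\|\leq \int|s_n|$, and letting $n\to\infty$ with the help of (b) delivers (c).

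I expect the real obstacle to lie in part (a): the definition demands that $\sum g_n(x)=|f|(x)$ hold at every point of absolute convergence of the series, not merely almost everywhere, which is exactly what forces the negligibility of $N$ and the accompanying divergent-on-a-null-set simple-function device from \cite{Measures} to be brought in explicitly rather than just waved at. Once (a) is in hand, parts (b) and (c) are essentially bookkeeping.
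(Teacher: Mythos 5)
Your argument is, at its core, the paper's: the telescoping terms $g_n=|s_n|-|s_{n-1}|$ are exactly the $\varphi_n$ of the paper's proof, the reverse triangle inequality $|g_n|\leq|f_n|$ giving condition $\mathbb{A}$ is the same, and parts (b) and (c) are handled identically (termwise integration with the cancelling pairs dropping out, then the inequality $\left\|\int s_m\right\|\leq\int|s_m|$ from Lemma \ref{L1} and a passage to the limit). You also correctly identify the crux: condition $\mathbb{B}$ must hold at \emph{every} point where the representing series converges absolutely, which the bare telescoping series does not guarantee. The one genuine difference is how you repair this, and your route is needlessly heavy. You interlace cancelling pairs $\pm h_k$ whose existence you derive from the nullity of $N=\{x:\sum\|f_n(x)\|=\infty\}$, which itself must be imported from the real-valued theory of \cite{Daniell} and \cite{Measures}, together with the (true but unproved here) characterization of null sets by divergent series of nonnegative simple functions. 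The natural choice is simply $h_k=|f_k|$: these are simple, satisfy $\sum\,\||f_k|\|_1=\sum\|f_k\|_1<\infty$, and diverge pointwise exactly on $N$ by the very definition of $N$. This is what the paper does, writing $|f|\simeq\varphi_1+|f_1|-|f_1|+\varphi_2+|f_2|-|f_2|+\cdots$; absolute convergence of this series at $x$ immediately forces $\sum\|f_n(x)\|<\infty$, hence $f(x)=\sum f_n(x)$ and $\sum\varphi_n(x)=\lim_m\|s_m(x)\|=\|f(x)\|$, with no measure-theoretic input at all. So your proof is correct modulo the two supporting facts you wave at, but the concrete choice $h_k=|f_k|$ renders both of them unnecessary and keeps part (a) entirely at the level of simple functions.
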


\begin{proof}
 Let
 $$
 g_n= f_1+ \dots + f_n \; \text{ for } n=1,2, \dots
 $$
 and 
 $$
\varphi_1=|f_1| \; \text{ and } \; \varphi_n = |g_n|-|g_{n-1}| \; \text{ for } n=2,3, \dots.
 $$
 We will show that
 \begin{equation}\label{|f|exp}
 |f| \simeq \varphi_1 +|f_1|-|f_1|+\varphi_2 +|f_2|-|f_2|+ \dots .
\end{equation}
Since
$$
|\varphi_n| = \left| |g_n|-|g_{n-1}| \right| \leq |g_n-g_{n-1}| = |f_n|, 
$$
we have
$$
\|\varphi_1\|_1 +\|f_1\|_1+\|f_1\|_1+\|\varphi_2\|_1 +\|f_2\|_1+\|f_2\|_2+ \dots \leq 3  \sum_{n=1}^\infty \|f_n\|_1 < \infty.
$$
If
$$
|\varphi_1(x)| +\|f_1(x)\|+\|f_1(x)\|+|\varphi_2(x)| +\|f_2(x)\|+\|f_2(x)\|+ \dots
< \infty 
$$
for some $x\in X$, then $\sum_{n=1}^\infty \|f_n(x)\|_1 < \infty$ and consequently $\sum_{n=1}^\infty f_n(x)=f(x)$. Hence
$$
\sum_{n=1}^m \varphi_n(x)=\|g_m(x)\|=\left\|\sum_{n=1}^m f_n(x)\right\| \to \|f(x)\|
$$
as $m\to \infty$. 

From \eqref{|f|exp} we obtain $|f|\in L^1(X,\mu)$. Moreover,
\begin{align*}
\int |f| &= \int \varphi_1 +\int |f_1|-\int |f_1|+\int \varphi_2 +\int |f_2|-\int|f_2|+ \dots\\
&= \lim_{n\to \infty} \int (\varphi_1+ \dots + \varphi_n)\\
&= \lim_{n\to \infty} \int |g_n|\\
&= \lim_{n\to \infty} \int |f_1+ \dots + f_n|.
\end{align*}

Finally, since
$$
\left\|\sum_{n=1}^m \int f_n\right\| \leq \int \left|\sum_{n=1}^m f_n\right|=\int |g_m|=\int | f_1+ \dots + f_m|,
$$
we have
$$
\left\| \int f_1+ \int f_2+\dots \right\| \leq \lim_{n\to \infty} \int |f_1+ \dots + f_n| = \int |f|.
$$
\end{proof}

\begin{corollary}
 If 
 $$
 f \simeq f_1+ f_2+\dots \quad \text{and} \quad f \simeq g_1+ g_2+\dots ,
 $$
 then
 $$
\int f_1+ \int f_2+\dots = \int g_1+ \int g_2+\dots .
 $$
\end{corollary}

\begin{proof}
 If $f \simeq f_1+ f_2+\dots $ and $ f \simeq g_1+ g_2+\dots$, then
 $$
 0 \simeq f_1-g_1+f_2-g_2+ \dots
 $$
 and hence
 $$
 \left\|  \int f_1- \int g_1+  \int f_2- \int g_2 +\dots \right\| \leq 0.
 $$
\end{proof}

Now we can define the integral of a Bochner integrable function.

\begin{definition}
 By the integral of a Bochner integrable function
$$
f \simeq  f_1 + f_2 + \cdots
$$
we mean the element of $\EE$ defined by
$$
\int{f} = \int{f_1} +  \int{f_2} + \cdots .
$$
\end{definition}

\begin{theorem}
 The integral is a linear operator from $L^1(X,\mu, \EE)$ to $\EE$.  Moreover, $\left\| \int f \right\| \leq \int |f|$ for all $f \in L^1(X,\mu, \EE)$ . 
\end{theorem}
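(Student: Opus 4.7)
The plan is to deduce everything from the series definition of the integral. Well-definedness on $L^1(X,\mu,\EE)$ is already secured by the preceding corollary, so only linearity and the norm inequality remain.

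For scalar multiplication, given a scalar $\alpha$ and $f \simeq \sum_{n=1}^\infty f_n$, I would show $\alpha f \simeq \sum_{n=1}^\infty \alpha f_n$. Condition $\mathbb{A}$ is immediate since $\sum \|\alpha f_n\|_1 = |\alpha|\sum \|f_n\|_1 < \infty$, and $\mathbb{B}$ follows because at any $x$ with $\sum \|\alpha f_n(x)\| < \infty$ either $\alpha = 0$ (both sides vanish) or $\sum \|f_n(x)\| < \infty$, in which case $\sum \alpha f_n(x) = \alpha f(x)$. Linearity of $\int$ on $\SF(\EE)$ then gives $\int \alpha f = \sum \int \alpha f_n = \alpha \int f$.

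For additivity, given $f \simeq \sum f_n$ and $g \simeq \sum g_n$, I would interleave the two series by setting $h_{2n-1} = f_n$ and $h_{2n} = g_n$ and verify $f+g \simeq \sum h_n$. Condition $\mathbb{A}$ holds since $\sum \|h_n\|_1 = \sum \|f_n\|_1 + \sum \|g_n\|_1 < \infty$. For $\mathbb{B}$, at any $x$ with $\sum \|h_n(x)\| < \infty$ both $\sum \|f_n(x)\|$ and $\sum \|g_n(x)\|$ converge, so $f(x) = \sum f_n(x)$ and $g(x) = \sum g_n(x)$, whence the interleaved series sums to $f(x)+g(x)$. Applying the definition and linearity on $\SF(\EE)$ gives $\int(f+g) = \sum \int h_n = \int f + \int g$.

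Finally, the norm bound is immediate from Theorem \ref{Th|f|}(c): picking any representation $f \simeq \sum f_n$, the definition gives $\int f = \sum \int f_n$, and part (c) says $\|\sum \int f_n\| \leq \int |f|$. There is no substantial obstacle here; the only point that requires care is verifying $\mathbb{B}$ for the interleaved series without tacitly assuming pointwise absolute convergence of the original series at every $x$, which the hypothesis does not supply.
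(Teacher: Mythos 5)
Your proposal is correct and follows essentially the same route as the paper: the paper also establishes linearity by noting $f+g \simeq f_1+g_1+f_2+g_2+\cdots$ (interleaving) and $\lambda f \simeq \lambda f_1 + \lambda f_2 + \cdots$, and derives the norm inequality from part (c) of Theorem \ref{Th|f|}. You simply spell out the verification of conditions $\mathbb{A}$ and $\mathbb{B}$ (including the $\alpha = 0$ case) that the paper leaves implicit.
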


\begin{proof} Linearity follows easily from the fact that, if  $f \simeq  f_1 + f_2 + \cdots$, $g \simeq  g_1 + g_2 + \cdots$, and $\lambda\in\RR$, then
$$
f+g \simeq  f_1 +g_1 + f_2 + g_2\cdots \quad \text{and} \quad \lambda f \simeq  \lambda f_1 + \lambda f_2 + \cdots .
$$
The inequality follows from part (c) of Theorem \ref {Th|f|}.
\end{proof}

We complete this section with a proof of Gelfand-Pettis integrability (\cite{Gelfand} and \cite{Pettis}) of Bochner integrable functions. If $\Lambda$ is a bounded linear functional on $\EE$ and $f:X\to \EE$, then the composition of $\Lambda$ and $f$ will be denoted by $\Lambda f$, that is $\Lambda f (x)=\Lambda(f(x))$.

\begin{theorem}
 Let $\Lambda$ be a bounded linear functional on $\EE$. If $f\in L^1(X,\mu, \EE)$, then $\Lambda  f \in L^1(X,\mu, \RR)$ and
 $$
 \Lambda \int f = \int \Lambda  f.
 $$
\end{theorem}

\begin{proof}
 Let $\Lambda$ be a bounded linear functional on $\EE$. 
 
 If $f=v_1A_1+\dots+v_nA_n$ for some $v_1, \dots , v_n \in \EE$ and disjoint $A_1, \dots , A_n \in \RI$, then
\begin{align*}
 \Lambda \int f &= \Lambda \left(\int (v_1A_1+\dots+v_nA_n)\right)\\
 &= \Lambda \left(v_1\mu(A_1)+\dots+v_n\mu(A_n)\right)\\
 &= \Lambda v_1 \mu(A_1)+\dots+\Lambda v_n \mu(A_n)\\
 &= \int \left(\Lambda v_1 A_1+\dots+\Lambda v_n A_n\right)=\int \Lambda f
\end{align*}
and
\begin{align*}
 \| \Lambda  f\|_1 &= \int |\Lambda (v_1A_1+\dots+v_nA_n)|\\
&= |\Lambda v_1| \mu(A_1)+\dots+|\Lambda v_n| \mu(A_n)\\
&\leq \|\Lambda\| \left(  \|v_1\| \mu(A_1)+\dots+ \|v_n\| \mu(A_n)\right)=  \|\Lambda\| \|f\|_1.
\end{align*}

If $f \simeq  f_1 + f_2 + \cdots$, then 
\begin{equation}\label{Lexpension}
\Lambda  f \simeq \Lambda  f_1 + |f_1|-|f_1|+\Lambda  f_2 +|f_2|-|f_2|+ \cdots .
\end{equation}
Indeed, we have
$$
\|\Lambda  f_1\|_1 + 2\|f_1\|_1+\|\Lambda  f_2\|_1 +2\|f_2\|_1+ \cdots \leq (\|\Lambda\|+2)\sum_{n=1}^\infty \|f_n\|_1<\infty
$$
and, if 
$$
|\Lambda(f_1(x))| + 2\|f_1(x)\|+|\Lambda (f_2(x))| +2\|f_2(x)\|+ \cdots < \infty
$$
for some $x\in X$, then $\sum_{n=1}^\infty \|f_n(x)\|<\infty$ and thus $f(x)=\sum_{n=1}^\infty f_n(x)$. Consequently,
$$
\Lambda f(x)=\sum_{n=1}^\infty \Lambda f_n(x)
$$
for that $x\in X$.

From \eqref{Lexpension} we get both $\Lambda  f \in L^1(X,\mu, \RR)$ and $\Lambda \int f = \int \Lambda  f$.
\end{proof}

\section{$L^1(X,\mu, \EE)$ as a Banach space}

It is important the tools of normed spaces can be applied to Bochner integrable functions. However, we need to deal with the usual problem, that is, $\|f\|_1=\int |f|$ is not a norm in $L^1(X,\mu, \EE)$, since $\int |f|=0$ does not imply $f=0$. The problem can be solved by identifying functions that are equal almost everywhere.

If $f,g \in L^1(X,\mu, \EE)$ and $\int |f-g|=0$, then we write $f \sim g$.  It is clear that $\sim$ is an equivalence relation. Let $\LL^1(X,\mu, \EE)$ be the space of equivalence classes, that is, $\LL^1(X,\mu, \EE)=L^1(X,\mu, \EE)/\sim$. It is easy to check that $\|f\|_1=\int |f|$ is a norm in $\LL^1(X,\mu, \EE)$. The difference between $\LL^1(X,\mu, \EE)$ and $L^1(X,\mu, \EE)$ is usually ignored.  It is important to be aware of this difficulty and to carefully interpret statements about $L^1(X,\mu, \EE)$. Then it should not lead to any problems. 

If $\|f_n -f\|_1\to 0$, then we will say that the sequence $(f_n)$ converges to $f$ {\it in norm} and write $f_n \to f$ i.n..

It is our goal to show that $L^1(X,\mu, \EE)$ is complete with respect to $\|\cdot\|_1$. The first step is the following technical lemma.

\begin{lemma}  \label{lem2.5.1} If $f \in L^1(X, \mu, \EE)$, then for
every $\varepsilon > 0$ there exists a sequence of simple functions 
$(f_n)$ such that $f \simeq  f_1  +  f_2  + \dots $ and 
$\sum^\infty_{n=1}\int|f_n|  \leq  \int|f|  +  \varepsilon$.
\end{lemma}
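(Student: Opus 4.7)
The plan is to start from an arbitrary representation $f \simeq g_1 + g_2 + \cdots$ guaranteed by $f \in L^1(X,\mu,\EE)$ and then ``collapse'' a long initial segment into a single simple function so that the new expansion telescopes most of the cancellation that inflates $\sum \|g_n\|_1$ above $\int |f|$.

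Concretely, I would first invoke Theorem \ref{Th|f|}(b), which gives $\int |g_1+\cdots+g_N| \to \int|f|$, and the convergence of $\sum \|g_n\|_1$, to choose an index $N$ with
\[
\int |g_1+\cdots+g_N| \leq \int |f| + \tfrac{\varepsilon}{2} \qquad\text{and}\qquad \sum_{n>N} \|g_n\|_1 \leq \tfrac{\varepsilon}{2}.
\]
Then I set $f_1 = g_1 + \cdots + g_N$ (still a simple function) and $f_k = g_{N+k-1}$ for $k \geq 2$.

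The desired norm estimate is immediate from the construction:
\[
\sum_{n=1}^\infty \int |f_n| = \int |g_1+\cdots+g_N| + \sum_{n>N} \|g_n\|_1 \leq \int |f| + \varepsilon.
\]
What remains is to verify $f \simeq f_1 + f_2 + \cdots$. Condition $\mathbb{A}$ is exactly the finiteness just displayed. For condition $\mathbb{B}$, suppose $\sum_k \|f_k(x)\| < \infty$ at some $x$. Then in particular $\sum_{n>N} \|g_n(x)\| < \infty$, and since the missing initial terms $g_1(x),\dots,g_N(x)$ are finite in number, also $\sum_{n=1}^\infty \|g_n(x)\| < \infty$. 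Applying condition $\mathbb{B}$ for the original expansion yields $f(x) = \sum_{n=1}^\infty g_n(x)$, which rearranges as $f_1(x) + \sum_{k \geq 2} f_k(x)$ since only finitely many terms are regrouped.

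There is no real obstacle here; the only subtlety is that one cannot simply group the $g_n$ into blocks of increasing size, because then condition $\mathbb{B}$ could fail at points where block-sums are summable but individual terms are not. Leaving the tail $g_{N+1}, g_{N+2}, \dots$ as individual entries in the new expansion is exactly what prevents this issue, since absolute summability of the new sequence at $x$ forces absolute summability of the old sequence at $x$.
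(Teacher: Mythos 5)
Your proposal is correct and follows essentially the same route as the paper: both collapse the initial segment $g_1+\dots+g_N$ into a single simple function $f_1$ and keep the tail terms individually, which is exactly what preserves condition $\mathbb{B}$. The only difference is cosmetic: you bound $\int|f_1|$ directly by $\int|f|+\varepsilon/2$ using part (b) of Theorem \ref{Th|f|}, whereas the paper derives the same control from the inequality $\int|f_1|-\int|f| \leq \int|f_1-f| \leq \sum_{n\geq 2}\int|f_n|$; both yield the stated estimate.
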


\begin{proof}  Let $f \simeq  g_1  +  g_2  + \dots $ be an arbitrary expansion of $f$ in a series of simple functions. Then there exists an $n_0 \in \mathbb N$ such that 
$\sum^\infty_{n=n_0+1}\int|g_n|  <  \frac{\varepsilon}{2}$.
Define 
$$
f_1  =  g_1  + \dots   +  g_{n_0} \;\; \text{and} \;\; f_n  =  g_{n_0+n-1} \;\;\text{ for }\;\; n  \geq  2. 
$$
Then obviously $f \simeq  f_1  +  f_2  + \dots $. Since $\int |f_1| - \int |f| \leq \int |f_1 - f|$ and $f-f_1 \simeq f_2+f_3+\dots $, we get
$$
\int |f_1| - \int |f| \leq \sum_{n=2}^\infty \int |f_n|
$$
and hence
$$
\int |f_1| - \sum_{n=2}^\infty \int |f_n| \leq \int |f|. 
$$
Consequently,
\begin{align*}
\sum^\infty_{n=1}\int|f_n| &= \int|f_1| + \sum_{n=2}^\infty \int |f_n|\\
&= \int|f_1| - \sum_{n=2}^\infty \int |f_n| +2 \sum_{n=2}^\infty \int |f_n|\\
&\leq \int |f| + 2 \sum_{n=2}^\infty \int |f_n|\\
&= \int |f| + 2 \sum^\infty_{n=n_0+1}\int |g_n|\\ 
&<   \int|f|  +  \varepsilon. 
\end{align*}
\end{proof}

Now we generalize the symbol $\simeq$ to series of arbitrary Bochner integrable functions.

Let $f_1, f_2, \dots \in L^1(X,\mu, \EE)$ and let $f:X\to\EE$ be an arbitrary function. If
\begin{enumerate}
	\item[$\mathbb{A}$] $\ds \sum_{n=1}^\infty \|f_n\|_1 < \infty$ and
	\item[$\mathbb{B}$] $\ds f(x) = \sum_{n=1}^\infty f_n(x)$ at those points $x\in X$ where $\ds \sum_{n=1}^\infty \|f_n(x)\| < \infty$,
\end{enumerate}
then we write 
\begin{equation*}
	f \simeq f_1+ f_2+\dots \quad \text{or} \quad f\simeq \sum_{n=1}^\infty f_n.
\end{equation*} 

\begin{theorem}\label{ThExp}

If $f_1,f_2,\ldots\in L^1(X,\mu, \EE)$ and $f \simeq f_1 + f_2 + \cdots,$ then $f\in L^1(X,\mu, \EE)$,
$$
f_1+f_2+\ldots = f \text{ i.n.}
$$
and
	$$\int f = \int f_1 + \int f_2 + \cdots.$$
	
\end{theorem}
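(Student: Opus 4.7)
The approach is to unfold the given series $f\simeq\sum_n f_n$ of Bochner-integrable functions into a single series of simple functions by refining each $f_n$. Apply Lemma~\ref{lem2.5.1} to each $f_n$ with tolerance $2^{-n}$ to obtain simple functions $f_{n,k}$ with $f_n\simeq f_{n,1}+f_{n,2}+\dots$ and $\sum_{k=1}^\infty\int|f_{n,k}|\leq\int|f_n|+2^{-n}$, and let $h_1,h_2,\dots$ be any enumeration of the double family $(f_{n,k})_{n,k\geq 1}$.

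I claim $f\simeq h_1+h_2+\dots$ as a simple-function expansion, which immediately gives $f\in L^1(X,\mu,\EE)$. Condition $\mathbb{A}$ is routine:
$$
\sum_{j=1}^\infty\|h_j\|_1=\sum_{n=1}^\infty\sum_{k=1}^\infty\int|f_{n,k}|\leq\sum_{n=1}^\infty\bigl(\|f_n\|_1+2^{-n}\bigr)<\infty.
$$
For $\mathbb{B}$, fix $x$ with $\sum_j\|h_j(x)\|<\infty$: absolute convergence of this double sum gives $\sum_k\|f_{n,k}(x)\|<\infty$ for every $n$, so $f_n(x)=\sum_k f_{n,k}(x)$ by condition $\mathbb{B}$ applied to the refinement of $f_n$; then $\sum_n\|f_n(x)\|\leq\sum_{n,k}\|f_{n,k}(x)\|<\infty$, so $f(x)=\sum_n f_n(x)=\sum_j h_j(x)$, the last equality by the rearrangement theorem for absolutely convergent series in $\EE$. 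Applying the definition of the integral to this simple-function expansion and rearranging the corresponding absolutely convergent scalar series yields $\int f=\sum_j\int h_j=\sum_n\sum_k\int f_{n,k}=\sum_n\int f_n$.

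Norm convergence follows by running the same construction on the tail. For each $N$, the function $g_N=f-(f_1+\cdots+f_N)$ admits the simple-function expansion $g_N\simeq\sum_{n>N,\,k}f_{n,k}$ by exactly the same argument, so Theorem~\ref{Th|f|}(b) together with the triangle inequality for integrals of simple functions gives
$$
\|g_N\|_1=\int|g_N|\leq\sum_{n>N}\sum_k\int|f_{n,k}|\leq\sum_{n>N}\bigl(\|f_n\|_1+2^{-n}\bigr),
$$
and the right side tends to $0$ as $N\to\infty$.

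The main obstacle I expect is the careful verification of condition $\mathbb{B}$ for the combined series: one must show that the single hypothesis $\sum_j\|h_j(x)\|<\infty$ is strong enough to trigger both the inner identifications $f_n(x)=\sum_k f_{n,k}(x)$ and the outer identification $f(x)=\sum_n f_n(x)$, and then to collapse the iterated sum to $\sum_j h_j(x)$. Absolute convergence of the double scalar series $\sum_{n,k}\|f_{n,k}(x)\|$ is exactly what makes all three steps go through simultaneously, and it is also what lets us freely rearrange the integrals in the final formula.
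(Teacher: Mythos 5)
Your proposal is correct and follows essentially the same route as the paper: apply Lemma \ref{lem2.5.1} to each $f_n$ with tolerances summing to a finite quantity, merge the resulting double array of simple functions into a single series, verify conditions $\mathbb{A}$ and $\mathbb{B}$ for that merged series, and handle norm convergence via the tail expansion of $f - f_1 - \dots - f_N$. Your verification of condition $\mathbb{B}$ is in fact somewhat more explicit than the paper's, and your tail bound $\sum_{n>N}(\|f_n\|_1 + 2^{-n})$ differs only cosmetically from the paper's $\sum_{k>N}\|f_k\|_1$.
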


\begin{proof}

Let $\varepsilon>0$ be arbitrary and let $\varepsilon_1 + \varepsilon_2 + \cdots$ be a series of positive numbers whose sum is $\varepsilon$. By Lemma \ref{lem2.5.1}, we can choose expansions
\begin{equation}\label{b20}
f_i \simeq f_{i1} + f_{i2} + \cdots \quad (i=1,2,\ldots),
\end{equation}
where $f_{ij}$ are simple functions such that
\begin{equation}\label{b21}
\int |f_{i1}| + \int |f_{i2}| + \cdots <  \int |f_i|+\varepsilon_i
\end{equation}
for all $i\in\NN$.  Let
\begin{equation}\label{b22}
g_1 + g_2 + \cdots
\end{equation}
be a series of simple functions which is composed of all the series in \eqref{b20}. Then from \eqref{b21} we obtain
\begin{equation}\label{b23}
\int |g_1| + \int |g_2| + \cdots < M + \varepsilon_1 + \varepsilon_2 + \cdots ,
\end{equation}
where $M = \int |f_1| + \int |f_2| + \cdots.$ Moreover, if the series \eqref{b22} converges absolutely at a point $x\in X$, then each of the series in \eqref{b20} converges absolutely at that point, and consequently 
$$
g_1(x) + g_2(x) + \cdots = f_1(x) + f_2(x) + \cdots = f(x)
$$
at that $x$. 
This proves that $f$ is Bochner integrable and 
$$
\int f = \int g_1 + \int g_2 + \cdots = \int f_1 + \int f_2 + \cdots.
$$
Moreover, since for every $n\in\NN$,
$$
f-f_1-\dots - f_n \simeq f_{n+1} + f_{n+2} + \dots,
$$
we have
$$
\|f-f_1-\dots - f_n\|_1 \leq \sum_{k=n+1}^\infty \|f_k\|_1 \to  0
$$
as $n\to \infty$, which means that $f_1+f_2+\ldots = f$ i.n..
\end{proof}

\begin{corollary}\label{somofser} If $f_1, f_2, \dots \in L^1(X,\mu, \EE)$ and $\ds \sum_{n=1}^\infty \|f_n\|_1 < \infty$, then there exists $f\in L^1(X,\mu, \EE)$ such that 
$f \simeq f_1+ f_2+\dots$. 
\end{corollary}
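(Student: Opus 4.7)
The plan is very short: construct a candidate $f$ by hand and then invoke Theorem~\ref{ThExp} to promote it to a Bochner integrable function.

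First I would use the completeness of the Banach space $\EE$ to define $f$ pointwise. At every $x\in X$ for which $\sum_{n=1}^\infty \|f_n(x)\| < \infty$, the partial sums $s_N(x) = \sum_{n=1}^N f_n(x)$ form a Cauchy sequence in $\EE$ because for $M<N$,
$$
\bigl\| s_N(x) - s_M(x) \bigr\| \leq \sum_{n=M+1}^N \|f_n(x)\|,
$$
and the right-hand side tends to $0$. Hence the limit exists in $\EE$, and I set $f(x) = \sum_{n=1}^\infty f_n(x)$ at such $x$, and $f(x) = 0$ (or any fixed vector) at the remaining points of $X$.

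With this definition, condition $\mathbb{A}$ holds by hypothesis and condition $\mathbb{B}$ holds by construction, so $f \simeq f_1 + f_2 + \cdots$ in the generalized sense introduced just before Theorem~\ref{ThExp}. Applying that theorem immediately yields $f \in L^1(X,\mu,\EE)$, which is what we needed.

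I do not expect any genuine obstacle here: the only nontrivial ingredient is that absolute summability in $\EE$ implies summability, which is where completeness of $\EE$ enters. Everything else is definitional, and the substantive work has already been done in Lemma~\ref{lem2.5.1} and Theorem~\ref{ThExp}.
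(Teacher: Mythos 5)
Your proof is correct and follows essentially the same route as the paper: define $f(x)=\sum_{n=1}^\infty f_n(x)$ where the series converges absolutely (completeness of $\EE$ guaranteeing convergence there) and $f(x)=0$ elsewhere, observe that $f\simeq f_1+f_2+\dots$ by construction, and invoke Theorem~\ref{ThExp}. Your explicit remark on where completeness of $\EE$ enters is a small but welcome addition that the paper leaves implicit.
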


\begin{proof}
 Let $f_1, f_2, \dots \in L^1(X,\mu, \EE)$ be such that $\ds \sum_{n=1}^\infty \|f_n\|_1 < \infty$. Define
$$
f(x)=\begin{cases} 
\sum^\infty_{n=1}f_n(x) & \text{ whenever }
\sum^\infty_{n=1}\|f_n(x)\|  <  \infty,\\
0 & \text{ otherwise.}
\end{cases}  
$$
Then $f \simeq f_1+ f_2+\dots$ and,
by Theorem \ref{ThExp}, $f\in L^1(X,\mu, \EE)$.
 \end{proof}

\begin{theorem}\label{thm2.8.1} The space $(L^1(X, \mu, \EE),\|\cdot\|_1)$ is complete.
\end{theorem}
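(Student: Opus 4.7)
The plan is to use the standard trick that a normed space is complete if and only if every absolutely convergent series converges, taking advantage of the two results just proved: Corollary \ref{somofser} turns an absolutely summable sequence in $L^1(X,\mu,\EE)$ into a series expansion $f \simeq \sum f_n$ of some $f\in L^1(X,\mu,\EE)$, and Theorem \ref{ThExp} then says the partial sums of that expansion converge to $f$ in norm. So essentially all the hard analytic work has already been done; what remains is the routine passage from absolute convergence to Cauchy completeness.

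Concretely, I would start with an arbitrary Cauchy sequence $(g_n)$ in $L^1(X,\mu,\EE)$ and extract a subsequence $(g_{n_k})$ with $\|g_{n_{k+1}}-g_{n_k}\|_1 < 2^{-k}$. Setting $f_1 = g_{n_1}$ and $f_k = g_{n_k}-g_{n_{k-1}}$ for $k\geq 2$, the partial sums $f_1+\dots+f_k$ equal $g_{n_k}$, and $\sum_{k=1}^\infty \|f_k\|_1 < \|g_{n_1}\|_1 + 1 < \infty$. By Corollary \ref{somofser}, there exists $f\in L^1(X,\mu,\EE)$ with $f\simeq f_1+f_2+\dots$, and by Theorem \ref{ThExp} the partial sums converge to $f$ in norm, i.e., $g_{n_k}\to f$ i.n.

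The final step is the standard observation that a Cauchy sequence with a convergent subsequence is itself convergent: given $\varepsilon > 0$, pick $N$ so that $\|g_n - g_m\|_1 < \varepsilon/2$ for $n,m \geq N$, and then pick $k$ large enough that $n_k \geq N$ and $\|g_{n_k} - f\|_1 < \varepsilon/2$; combining these via the triangle inequality gives $\|g_n - f\|_1 < \varepsilon$ for all $n\geq N$, so $g_n \to f$ i.n.

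I do not expect any serious obstacle: the potentially subtle point — namely, that the pointwise-defined $f$ coming from an absolutely summable series is actually in $L^1(X,\mu,\EE)$ and is the norm limit of its partial sums — has been absorbed into Corollary \ref{somofser} and Theorem \ref{ThExp}. Everything else is purely formal manipulation of the Cauchy condition, and no additional measurability or integrability considerations need to be addressed at this stage.
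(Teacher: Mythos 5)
Your proposal is correct and follows essentially the same route as the paper: both reduce completeness to the convergence of absolutely convergent series and then invoke Corollary \ref{somofser} together with Theorem \ref{ThExp}. The only difference is that you write out in full the standard equivalence (Cauchy sequence $\rightarrow$ rapidly convergent subsequence $\rightarrow$ telescoping absolutely convergent series $\rightarrow$ convergence of the whole sequence), which the paper simply cites implicitly.
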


\begin{proof}   We will prove that every absolutely convergent series converges in norm. If $\sum^\infty_{n=1}\|f_n\|_1 < \infty$ for some $f_n \in L^1(X, \mu, \EE)$, then, by Corollary \ref{somofser}, there exists an $f \in L^1(X, \mu, \EE)$ such that $f \simeq \sum^\infty_{n=1}f_n$. This in turn implies, by Theorem \ref{ThExp}, that the series $\sum^\infty_{n=1}f_n$ converges to $f$ in norm.
\end{proof}

\section{Convergence almost everywhere} 

A set $A \subseteq \mathbb X$ is called a {\it null set} or a {\it set of measure zero} if its characteristic function is integrable, that is, $\chi_A\in L^1(X,\mu)$ and $\int \chi_A =0$. Every subset of a null set is a null set.  A countable union of null sets is a null set. 

\begin{definition}\label{def2.7.2}  If
for two functions $f,g: X \to \EE$ the set of all
$x \in X$ for which $f(x) \neq g(x)$ is a null set,
then we say that $f$ {\it equals $g$ almost everywhere} and write $f = g$ a.e.. 
\end{definition} 

\begin{theorem} \label{thm2.7.2} $f = g$ a.e. if and only if $\int|f-g| = 0$.   
\end{theorem}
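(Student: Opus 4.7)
The plan is to exploit the extended $\simeq$ relation introduced just before Theorem \ref{ThExp}, in which condition $\mathbb{B}$ only constrains the limit at points of absolute convergence and leaves the value unspecified elsewhere. The underlying trick is this: if $\psi\in L^1$ satisfies $\int\psi=0$, then every scaled copy $k\psi$ has $\|k\psi\|_1=0$, so the dummy series $\sum_{k=1}^\infty k\psi$ has summable $L^1$ norms, vanishes identically on $\{\psi=0\}$, and diverges on $\{\psi>0\}$. This dummy series lets us transfer information between a set and an integral.

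For the forward implication, let $A=\{f\ne g\}$ and assume $A$ is a null set, so $\chi_A\in L^1(X,\mu,\RR)$ with $\int\chi_A=0$. I would verify that $|f-g|\simeq \sum_{k=1}^\infty k\chi_A$: condition $\mathbb{A}$ is immediate because every $\|k\chi_A\|_1=0$, and condition $\mathbb{B}$ is checked pointwise, the series being identically zero on $X\setminus A$ (where $|f-g|$ also vanishes) and divergent on $A$ (so those points carry no constraint). Applying Theorem \ref{ThExp} then yields $|f-g|\in L^1(X,\mu,\RR)$ and $\int|f-g|=\sum_{k=1}^\infty k\int\chi_A=0$.

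For the reverse implication, set $\psi=|f-g|$ and $A=\{\psi>0\}=\{f\ne g\}$, assuming $\int\psi=0$. I would show $\chi_A\simeq \sum_{k=1}^\infty k\psi$ by a symmetric argument: each $\|k\psi\|_1=k\int\psi=0$, so $\mathbb{A}$ holds; on $A$ the series diverges (unconstrained), and on $X\setminus A$ both $\chi_A(x)$ and $\sum k\psi(x)$ are zero, so $\mathbb{B}$ holds. Theorem \ref{ThExp} then gives $\chi_A\in L^1(X,\mu,\RR)$ with $\int\chi_A=\sum k\int\psi=0$, which is precisely the statement that $A$ is a null set.

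Neither direction presents a genuine obstacle once the dummy-series idea is in place. The one subtlety to be mindful of is that one must invoke the \emph{extended} $\simeq$ (for series of arbitrary $L^1$ functions introduced just before Theorem \ref{ThExp}), rather than the original definition restricted to simple functions, since the terms $\chi_A$ and $k\psi$ need not be simple.
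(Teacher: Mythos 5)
Your proof is correct and takes essentially the same route as the paper: the paper uses the constant dummy series $h+h+\dots$ and $|f-g|+|f-g|+\dots$ (with $h$ the characteristic function of $\{f\neq g\}$) where you use the weighted series $\sum_k k\chi_A$ and $\sum_k k\psi$, but the mechanism is identical --- a series with vanishing $\|\cdot\|_1$-norms that diverges precisely off the set where the target function is zero, so that the extended $\simeq$ relation and Theorem \ref{ThExp} yield integrability and a zero integral in each direction.
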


\begin{proof}  Let $Z=\{x \in X : f(x) \neq g(x)\}$ and let $h$ be the characteristic function of $Z$.
  
If $f = g$ a.e., then $\int|h| = \int h = 0$. 
Therefore 
$$
|f - g| \simeq h + h + \dots  ,
$$
which implies $\int|f-g| = 0$. 

Conversely, if $\int|f-g| = 0$, then
$$
h \simeq |f-g| + |f-g| + \dots  ,
$$
and hence $\int h = 0$.  This shows that $Z$ is a null set, that is,
$f = g$ a.e..
 
\end{proof}

\begin{theorem} \label{thm2.7.3}  Suppose $f_n \rightarrow f$ i.n. Then $f_n \rightarrow g$  i.n. if and only if $f = g$ a.e..
\end{theorem}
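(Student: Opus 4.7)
The plan is to reduce everything to the triangle inequality for the seminorm $\|\cdot\|_1$ together with Theorem \ref{thm2.7.2}, which identifies a.e.~equality with vanishing of $\int|f-g|$. Since $f_n, f, g$ all lie in $L^1(X,\mu,\EE)$, the differences $f-g$, $f-f_n$, and $f_n-g$ are in $L^1$ by linearity, so $\|\cdot\|_1$ is well defined on each of them and satisfies the triangle inequality as established in Section 2 (extended to $L^1$ via Theorem \ref{ThExp}).

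For the \emph{if} direction I would assume $f = g$ a.e.; by Theorem \ref{thm2.7.2} this gives $\|f-g\|_1 = \int|f-g| = 0$. Then for each $n$,
$$
\|f_n - g\|_1 \leq \|f_n - f\|_1 + \|f - g\|_1 = \|f_n - f\|_1 \to 0,
$$
so $f_n \to g$ i.n.

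For the \emph{only if} direction I would assume $f_n \to g$ i.n.~and estimate
$$
\|f-g\|_1 \leq \|f - f_n\|_1 + \|f_n - g\|_1,
$$
whose right-hand side tends to $0$ as $n\to\infty$. Since $\|f-g\|_1$ does not depend on $n$, this forces $\int|f-g| = \|f-g\|_1 = 0$, and Theorem \ref{thm2.7.2} then yields $f = g$ a.e. There is no real obstacle here beyond invoking the triangle inequality correctly; the content of the statement is entirely carried by the equivalence in Theorem \ref{thm2.7.2}.
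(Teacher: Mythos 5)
Your proof is correct and follows essentially the same route as the paper: both directions reduce to the triangle inequality for $\|\cdot\|_1$ combined with the equivalence $f=g$ a.e. $\Leftrightarrow \int|f-g|=0$ from Theorem \ref{thm2.7.2}. The paper phrases the converse direction as ``$f_n-f_n\rightarrow f-g$ i.n.,'' but that is just your triangle-inequality estimate in disguise.
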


\begin{proof}  If $f_n \rightarrow f$ i.n. and $f = g$ a.e., then
$$
\|f_n-g\|_1 = \int|f_n-g| \leq \int|f_n-f| + 
\int|f-g| = \int|f_n-f| = \|f_n-f\|_1 
\rightarrow 0.
$$

If $f_n \rightarrow f$ i.n. and $f_n \rightarrow g$ i.n., then 
$f_n-f_n \rightarrow f-g$ i.n..  This implies  
$$
\int|f-g|  =  \int|f_n-f_n-f + g|  \rightarrow  0,
$$
completing the proof.
\end{proof}

\begin{definition} We say that a sequence of functions $f_1,  f_2,\dots : X \to \EE$  {\it converges to $f$ almost everywhere} and write $f_n \rightarrow f$ a.e., if $f_n(x) \rightarrow f(x)$ for every $x$ except a null set.
\end{definition} 

The following properties of convergence almost everywhere are immediate consequences of the definition:

{\it
If $ f_n \rightarrow f$ a.e. and
$\lambda \in \mathbb R$, then 
$\lambda f_n \rightarrow \lambda f$ a.e.

If $f_n \rightarrow f$ a.e. and $g_n \rightarrow g$ a.e.,
then  $f_n + g_n \rightarrow f + g$ a.e.

If $f_n \rightarrow f$ a.e., then $|f_n| \rightarrow |f|$
a.e.
}

\begin{theorem} \label{thm2.7.4} Suppose $f_n \rightarrow f$ a.e. Then $f_n \rightarrow g$ a.e. if and only if $f = g$ a.e.
\end{theorem}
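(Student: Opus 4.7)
The plan is to prove both implications directly from the definition of almost everywhere convergence, together with the fact recorded earlier in this section that a countable (hence finite) union of null sets is again a null set. Uniqueness of limits in the Banach space $\EE$ will take care of the rest.

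For the forward direction, I would assume $f_n \to f$ a.e.\ and $f_n \to g$ a.e., and pick null sets $N_1, N_2$ outside of which $f_n(x)\to f(x)$ and $f_n(x)\to g(x)$ respectively. On the complement of $N_1\cup N_2$ both pointwise limits exist, and by uniqueness of limits in $\EE$ we get $f(x)=g(x)$. Since $N_1\cup N_2$ is a null set, $f=g$ a.e.

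For the converse direction, I would assume $f_n\to f$ a.e.\ and $f=g$ a.e., choose a null set $N_1$ outside of which $f_n(x)\to f(x)$, and let $N_2=\{x\in X : f(x)\neq g(x)\}$, which is a null set by hypothesis. For $x\notin N_1\cup N_2$ we have $f_n(x)\to f(x)=g(x)$, and since $N_1\cup N_2$ is null, this says $f_n\to g$ a.e.

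There is no real obstacle here; the argument is essentially a bookkeeping exercise with null sets. The only point worth emphasizing is that the proof relies on the already-noted closure of the family of null sets under finite unions, and on uniqueness of Banach-space limits — no integration-theoretic machinery (e.g.\ Theorem \ref{thm2.7.2}) is needed, in contrast with the norm-convergence analogue Theorem \ref{thm2.7.3}.
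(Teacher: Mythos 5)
Your proof is correct and follows essentially the same route as the paper: the converse direction is verbatim the paper's union-of-null-sets argument, and your forward direction simply unpacks the paper's shortcut ``$f_n - f_n \to f-g$ a.e., hence $f-g=0$ a.e.'' into the explicit null-set bookkeeping plus uniqueness of limits in $\EE$. No substantive difference.
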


\begin{proof}  If $f_n \rightarrow f$ a.e. and $f_n \rightarrow g$ a.e., then $f_n-f_n \rightarrow f-g$ a.e., which means
that $f-g = 0$ a.e..

Now let $A=\{ x \in X : f_n(x)\nrightarrow f(x)\}$ and $B=\{ x \in X : g_n(x)\nrightarrow g(x)\}$.  Then $A$ and $B$ are null sets and so is $A\cup B$. Since $f_n(x)\rightarrow g(x)$ for every $x$ not in $A\cup B$, we have  $f_n \rightarrow g$ a.e..
\end{proof}

\begin{theorem} \label{thm2.7.5} If $f_1, f_2, \dots \in L^1(X,\mu,\EE)$ and $\ds \sum_{n=1}^\infty \|f_n\|_1 < \infty$, then the series 
$f_1 + f_2 + \dots $ converges almost everywhere.
\end{theorem}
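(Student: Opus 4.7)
My plan is to combine Corollary \ref{somofser} with the completeness of $\EE$, reducing the claim to a null-set statement that is then handled via the real-valued theory. Applying Corollary \ref{somofser} to $(f_n)$ yields $f \in L^1(X,\mu,\EE)$ with $f \simeq f_1 + f_2 + \cdots$; by the definition of $\simeq$ this means $f(x) = \sum_{n=1}^\infty f_n(x)$ at every $x$ for which $\sum_{n=1}^\infty \|f_n(x)\| < \infty$. At any such $x$ the series is absolutely convergent in the Banach space $\EE$ and hence convergent in norm. Thus the partial sums $f_1 + \cdots + f_n$ converge pointwise on the complement of
$$
Z = \left\{x \in X : \sum_{n=1}^\infty \|f_n(x)\| = \infty\right\},
$$
and the theorem is reduced to showing that $Z$ is a null set.

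For this remaining step I would pass to the scalar case. By Theorem \ref{Th|f|}(a), each $|f_n|$ lies in $L^1(X,\mu)$ with $\int |f_n| = \|f_n\|_1$, so $\sum_{n=1}^\infty \int |f_n| < \infty$. Applying Corollary \ref{somofser} once more, now in the real-valued setting, produces $g \in L^1(X,\mu)$ with $g \simeq |f_1| + |f_2| + \cdots$, and the claim that $Z$ is null is precisely the scalar instance of the present theorem applied to $(|f_n|)$. That scalar fact is part of the Mikusi\'nski construction of $L^1(X,\mu)$ that this paper takes as background (see \cite{Measures}).

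The step I expect to be the main obstacle is exactly this scalar null-set fact; within the paper's framework the natural route is to import it from \cite{Measures}. A self-contained derivation would first invoke Lemma \ref{lem2.5.1} to expand each $f_n$ as a series of simple functions, consolidate these into a single sequence $\phi_1, \phi_2, \ldots$ of simple functions with $\sum \|\phi_k\|_1 < \infty$, and then prove that $\{x : \sum_k \|\phi_k(x)\| = \infty\}$ is null by a Chebyshev-type bound on the level sets of the partial sums (which for simple $\phi_k$ are finite unions of elements of $\RI$, so their measures are controlled by $\mu$). The triangle inequality $\|f_n(x)\| \le \sum_j \|f_{n,j}(x)\|$ would then exhibit $Z$ as a subset of this null set, making $Z$ null as required.
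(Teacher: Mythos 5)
Your proposal agrees with the paper through its first half: both invoke Corollary \ref{somofser} to produce $f\in L^1(X,\mu,\EE)$ with $f \simeq f_1 + f_2 + \cdots$, and both reduce the theorem to showing that $Z=\{x\in X : \sum_{n=1}^\infty \|f_n(x)\|=\infty\}$ is a null set. The routes diverge at that last step. The paper disposes of it with a short self-contained trick: letting $g$ be the characteristic function of $Z$, one checks that $g \simeq f_1 - f_1 + f_2 - f_2 + \cdots$ (condition $\mathbb{B}$ is vacuous on $Z$ and forces $g=0$ off $Z$, which holds), whence Theorem \ref{ThExp} yields $g\in L^1$ with $\int |g| = \int g = \int f_1 - \int f_1 + \cdots = 0$, so $Z$ is null by definition. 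You instead reduce to the scalar instance of the theorem applied to $(|f_n|)$ and import that fact from the background theory of $L^1(X,\mu)$ in \cite{Measures}, offering a Chebyshev-type argument on simple functions as a fallback. Your reduction is legitimate: Theorem \ref{Th|f|}(a) does give $|f_n|\in L^1(X,\mu)$ with $\int|f_n|=\|f_n\|_1$, and the paper explicitly leans on the scalar theory elsewhere. But it outsources precisely the content being proved, whereas the paper's device derives the null-set claim entirely from machinery already established in this paper, with no appeal to the scalar fundamental lemma and no need for the covering/Chebyshev argument you correctly identify as the main obstacle. Your fallback sketch of that argument is only an outline of what is genuinely the hard part (one must still produce an expansion exhibiting $\chi_Z$ as integrable with integral zero, not merely bound measures of level sets in $\RI$); with the citation to the scalar theory your proof is complete, just less economical and less self-contained than the paper's.
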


\begin{proof}  By Corollary \ref{somofser}, there exists a function 
$f \in L^1(X,\mu,\EE)$ such that $f \simeq f_1 + f_2 + \dots $. Since $f(x) = \sum^\infty_{n=1}f_n(x)$ for every $x\in X $ such that $\sum^\infty_{n=1}\|f_n(x)\| < \infty$, it suffices to show that the set of all points $x \in X$ for which the series
$\sum^\infty_{n=1}\|f_n(x)\|$ is not absolutely convergent is a null set.  Let $g$ be the characteristic function of that set. Then $g \simeq f_1 - f_1 + f_2 - f_2 + \dots  $, and consequently
$$
\int|g| = \int g =\int f_1 - \int f_1 + \int f_2 - \int f_2 + \dots  = 0. 
$$
\end{proof}

The above theorem leads us to an important corollary.

\begin{corollary} \label{cor2.7.1} If $f \simeq f_1 + f_2 + \dots $ , then 
$f = f_1 + f_2 + \dots $ a.e. 
\end{corollary}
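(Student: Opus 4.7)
The plan is a near-immediate appeal to Theorem \ref{thm2.7.5} together with condition $\mathbb{B}$ from the definition of $\simeq$. Suppose $f \simeq f_1 + f_2 + \dots$. Then by hypothesis $\sum_{n=1}^\infty \|f_n\|_1 < \infty$, which is exactly the assumption of Theorem \ref{thm2.7.5}.

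Inspecting the proof of Theorem \ref{thm2.7.5}, one sees that the exceptional null set is precisely
$$
N = \left\{ x \in X : \sum_{n=1}^\infty \|f_n(x)\| = \infty \right\};
$$
indeed, its characteristic function $g$ was shown to satisfy $\int g = 0$, so $N$ is a null set. Thus for every $x \in X \setminus N$ the series $\sum_{n=1}^\infty f_n(x)$ converges absolutely in $\EE$, and condition $\mathbb{B}$ in the definition of $f \simeq f_1 + f_2 + \dots$ gives
$$
f(x) = \sum_{n=1}^\infty f_n(x) \quad \text{for every } x \in X \setminus N.
$$
Since $N$ is a null set, this is exactly the statement $f = f_1 + f_2 + \dots$ a.e.

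There is no real obstacle here: the whole content of the corollary is to re-read the conclusion of Theorem \ref{thm2.7.5} through the lens of the definition of almost everywhere convergence (Definition preceding Theorem \ref{thm2.7.4}). The only small point to be careful about is that the null set identified by Theorem \ref{thm2.7.5} --- the set where absolute convergence fails --- is the same set excluded by clause $\mathbb{B}$; once this is noted, the argument is a one-line consequence.
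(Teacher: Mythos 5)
Your proof is correct and matches the paper's intent: the paper states this corollary without proof as an immediate consequence of Theorem \ref{thm2.7.5}, whose argument shows precisely that the set where $\sum_{n=1}^\infty \|f_n(x)\|$ diverges is null, after which condition $\mathbb{B}$ gives $f(x)=\sum_{n=1}^\infty f_n(x)$ off that set. Nothing to add.
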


In general, convergence in norm does not imply convergence almost everywhere and convergence almost everywhere does not imply convergence in norm. It turns out that for absolutely convergent series in $L^1(X,\mu,\EE)$ both types of convergence are equivalent.

\begin{theorem} \label{thm2.7.6}  Let $f_1, f_2, \dots  \in L^1(X,\mu,\EE)$  and $\ds \sum_{n=1}^\infty \|f_n\|_1 < \infty$.  Then \\ 
$f = f_1 + f_2 + \dots $ a.e. if and only if 
$f = f_1 + f_2 + \dots $ i.n.
\end{theorem}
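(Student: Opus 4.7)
The plan is to use Corollary \ref{somofser} to produce a canonical integrable ``sum'' $h$ of the series, show that both types of convergence of $f_1+f_2+\cdots$ force $f = h$ almost everywhere, and then transfer between the two notions of convergence through $h$ using Theorems \ref{thm2.7.3} and \ref{thm2.7.4}. Write $s_n = f_1 + \cdots + f_n$ for the partial sums. Since $\sum_{n=1}^\infty \|f_n\|_1 < \infty$, Corollary \ref{somofser} produces a function $h \in L^1(X,\mu,\EE)$ with $h \simeq f_1+f_2+\cdots$. Theorem \ref{ThExp} then gives $s_n \to h$ i.n., while Corollary \ref{cor2.7.1} gives $s_n \to h$ a.e. Thus the partial sums converge to one and the same function $h$ in both senses.

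For the forward direction, assume $f = f_1 + f_2 + \cdots$ a.e., that is, $s_n \to f$ a.e. Combining this with $s_n \to h$ a.e. through Theorem \ref{thm2.7.4} yields $f = h$ a.e. Since $s_n \to h$ i.n.\ and $f = h$ a.e., Theorem \ref{thm2.7.3} gives $s_n \to f$ i.n., i.e., $f = f_1 + f_2 + \cdots$ i.n.

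The reverse direction is symmetric. Assuming $s_n \to f$ i.n., combining with $s_n \to h$ i.n.\ via Theorem \ref{thm2.7.3} gives $f = h$ a.e., and then Theorem \ref{thm2.7.4} applied to $s_n \to h$ a.e.\ yields $s_n \to f$ a.e. No step looks hard here; the only thing to be careful about is recognizing that the statements ``$f = f_1 + f_2 + \cdots$ a.e.'' and ``$f = f_1 + f_2 + \cdots$ i.n.'' are exactly convergence of the partial sums $s_n$ to $f$ in the respective senses, after which the conclusion reduces to two applications of earlier results about how a.e.\ and i.n.\ limits are determined up to equality a.e.
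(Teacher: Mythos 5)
Your proof is correct and follows essentially the same route as the paper's: both introduce the canonical sum $h \simeq f_1 + f_2 + \cdots$ via Corollary \ref{somofser}, establish convergence of the partial sums to it in norm (Theorem \ref{ThExp}) and almost everywhere (Corollary \ref{cor2.7.1}), and then shuttle between the two modes of convergence using Theorems \ref{thm2.7.3} and \ref{thm2.7.4}. No differences worth noting.
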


\begin{proof}  By Corollary \ref{somofser}, there exists a function 
$g \in L^1(\mathbb R^N)$ such that $g \simeq f_1 + f_2 + \dots $ Then, by Theorem \ref{ThExp} we have $g = f_1 + f_2 + \dots $ i.n. and, by Corollary \ref{cor2.7.1},  we have $g = f_1 + f_2 + \dots $  a.e..

Now, if $f = f_1 + f_2 + \dots $ a.e., then $f = g$ a.e., by Theorem \ref{thm2.7.4}. Hence $f = f_1 + f_2 + \dots $  i.n., by
Theorem \ref{thm2.7.3}.

Conversely, if $f = f_1 + f_2 + \dots $  i.n., then  $f = g$ a.e., by Theorem \ref{thm2.7.3}. Hence $f = f_1 + f_2 + \dots $  a.e., by
Theorem \ref{thm2.7.4}.
\end{proof}

\section{The Dominated Convergence Theorem}

First we prove the following useful theorem that sheds more light on the relationship between convergence almost everywhere and convergence in norm.

\begin{theorem}\label{thm2.8.2} If $f_n \rightarrow f$ i.n., then there exists a subsequence $(f_{p_n})$  of $(f_n)$ such that $f_{p_n} \rightarrow f$ a.e..
\end{theorem}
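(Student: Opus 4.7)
The plan is to extract a fast-convergent subsequence and then pass from norm convergence to almost everywhere convergence through an absolutely convergent telescoping series, where the earlier machinery of this section does the heavy lifting.

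First I would use that $f_n \to f$ i.n. to choose indices $p_1 < p_2 < \dots$ with $\|f_{p_n} - f\|_1 < 2^{-n-1}$ for every $n$. This immediately gives
$$
\|f_{p_{n+1}} - f_{p_n}\|_1 \leq \|f_{p_{n+1}} - f\|_1 + \|f - f_{p_n}\|_1 < 2^{-n},
$$
so the series $f_{p_1} + (f_{p_2} - f_{p_1}) + (f_{p_3} - f_{p_2}) + \cdots$ is absolutely convergent in $L^1(X,\mu,\EE)$. Its $N$th partial sum is exactly $f_{p_N}$, which is the reason for arranging the subsequence this way.

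Next I would apply Corollary \ref{somofser} to obtain a function $g \in L^1(X,\mu,\EE)$ with
$$
g \simeq f_{p_1} + (f_{p_2} - f_{p_1}) + (f_{p_3} - f_{p_2}) + \cdots .
$$
By Corollary \ref{cor2.7.1} the partial sums of the right-hand side converge to $g$ almost everywhere, i.e.\ $f_{p_N} \to g$ a.e., and by Theorem \ref{ThExp} they also converge to $g$ in norm.

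Finally I would reconcile $g$ with $f$: since $(f_{p_N})$ is a subsequence of $(f_n)$, we still have $f_{p_N} \to f$ i.n., and together with $f_{p_N} \to g$ i.n. Theorem \ref{thm2.7.3} forces $f = g$ a.e. Combined with $f_{p_N} \to g$ a.e. and the fact that altering a limit on a null set preserves a.e. convergence, this yields $f_{p_N} \to f$ a.e., which is what we wanted. The only delicate point is the subsequence extraction guaranteeing absolute convergence of the telescoping series; once that is in place, every remaining step is a direct invocation of a previous result.
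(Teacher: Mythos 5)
Your proposal is correct and follows essentially the same route as the paper's own proof: extract a subsequence with geometrically decaying errors, form the absolutely convergent telescoping series, obtain $g$ via Corollary \ref{somofser}, get $f_{p_n}\to g$ a.e.\ from Corollary \ref{cor2.7.1} and $f=g$ a.e.\ from Theorem \ref{thm2.7.3}, then conclude with Theorem \ref{thm2.7.4}. The only differences are cosmetic (the choice of $2^{-n-1}$ versus $2^{-n}$ for the error bound).
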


\begin{proof}  Since $\|f_n-f\|_1 \rightarrow 0$, there exists an increasing sequence of positive integers $(p_n)$ such that 
$\|f_{p_n}-f\|_1 < 2^{-n}$.  Then
$$
\|f_{p_{n+1}}-f_{p_n}\|_1 \leq \|f_{p_{n+1}}-f\|_1 + \|f-f_{p_n}\|_1 <
\frac{3}{2^{n+1}}
$$
and consequently
$$
\|f_{p_1}\|_1 + \|f_{p_2}-f_{p_1}\|_1 + \|f_{p_3}-f_{p_2}\|_1 + \dots   < \infty.
$$
Thus, there exists a $g \in L^1(X,\mu,\EE)$ such that 
$$
g \simeq f_{p_1} + (f_{p_2}-f_{p_1}) + (f_{p_3}-f_{p_2}) + \dots ,
$$
and, by Corollary \ref{cor2.7.1},
$$
g = f_{p_1} + (f_{p_2}-f_{p_1}) + (f_{ p_3}-f_{p_2}) + \dots  \text{ a.e.} \; .
$$
This means $f_{p_n} \rightarrow g$ a.e.  Since also
$f_{p_n} \rightarrow g$ i.n.  and $f_{p_n} \rightarrow f$ i.n., we conclude $f = g$ a.e., by Theorem \ref{thm2.7.3}.  Therefore  $f_{p_n} \rightarrow f$ a.e., by Theorem \ref{thm2.7.4}.  
\end{proof}

Note that the above result can be easily obtained from the same result for real valued functions.  Indeed, if $f_n \rightarrow f$ i.n., then $|f_n-f|\to 0$ i.n. and thus $|f_{p_n}-f|\to 0$ a.e. for some increasing sequence of indices $(p_n)$.  But then $f_{p_n}-f\to 0$ a.e..

\begin{lemma}\label{DomiLemma}
Let $f_1, f_2, \dots  \in L^1(X,\mu,\EE)$. If $f_n \to 0$ a.e. and there exists a function $h\in L^1(X,\mu,\RR)$ such that $|f_n| \leq h$ for every $n \in \mathbb N$, then $f_n \to 0$ i.n..
\end{lemma}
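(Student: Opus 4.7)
The plan is to reduce to the dominated convergence theorem for real-valued integrable functions on $(X,\RI,\mu)$, which is developed in \cite{Measures} within the same Mikusi\'nski-style framework used in this paper.

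Set $g_n = |f_n|$. Since each $f_n \in L^1(X,\mu,\EE)$, part (a) of Theorem~\ref{Th|f|} gives $g_n \in L^1(X,\mu,\RR)$. The hypothesis $f_n \to 0$ a.e.\ forces $\|f_n(x)\|\to 0$ off a null set, so $g_n \to 0$ a.e.; and $g_n \leq h \in L^1(X,\mu,\RR)$ by assumption. These are precisely the hypotheses of the real-valued dominated convergence theorem, which yields $\int g_n \to 0$. Since $\|f_n\|_1 = \int |f_n| = \int g_n$, this is exactly $f_n \to 0$ i.n..

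Should a self-contained proof be preferred, the real-valued step can be carried out inside the present framework as follows. First, establish a Beppo Levi statement: if $0\leq u_k \in L^1(X,\mu,\RR)$ is nondecreasing with $\sup_k \int u_k < \infty$, then $u_k$ converges a.e.\ to some $u\in L^1(X,\mu,\RR)$ with $\int u = \lim \int u_k$; this follows by applying Corollary~\ref{somofser} and Corollary~\ref{cor2.7.1} to the telescoping series $u_1 + (u_2-u_1) + (u_3-u_2) + \cdots$. Next, for each $N$, form $w_N = \sup_{k\geq N} g_k$ as an iterated monotone limit of the finite maxima $\max(g_N,\ldots,g_{N+j})$, each of which is integrable via $\max(a,b) = \tfrac12(a+b+|a-b|)$; since these maxima are all dominated by $h$, Beppo Levi places $w_N \in L^1(X,\mu,\RR)$. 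The sequence $(w_N)$ is decreasing, nonnegative, and tends to $0$ a.e., so $h - w_N$ is nondecreasing with pointwise limit $h$ a.e.; Beppo Levi then gives $\int w_N \to 0$. Since $g_n \leq w_n$, the conclusion $\|f_n\|_1 \to 0$ follows. The main step requiring care in this self-contained version is the iterated monotone construction of $w_N$, since nothing at the Banach-valued level bypasses the real-valued Beppo Levi argument, which is exactly why the cleanest route is to invoke the already-developed real-valued theory.
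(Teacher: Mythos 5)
Your proposal is correct, and your self-contained fallback is essentially the paper's own proof: the paper likewise forms the finite maxima $g_{m,n}=\max\{|f_m|,\dots,|f_{m+n}|\}$, passes to $g_m=\sup_{k\geq m}|f_k|$ by a monotone limit dominated by $h$, observes that $(g_m)$ is non-increasing with $g_m\to 0$ a.e., and concludes $\int g_m\to 0$ by the Monotone Convergence Theorem, whence $\int|f_n|\leq\int g_n\to 0$. The only differences are cosmetic: the paper invokes the real-valued Monotone Convergence Theorem as a black box from the previously developed theory of $L^1(X,\mu)$, whereas you derive the needed Beppo Levi statement from Corollary \ref{somofser}, Theorem \ref{ThExp}, and Corollary \ref{cor2.7.1} applied to the telescoping series (a correct and arguably more self-contained route, since those results specialize to $\EE=\RR$). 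Your primary one-line route --- citing the real-valued Dominated Convergence Theorem directly --- is also logically sound given that the paper explicitly assumes the real-valued theory of $L^1(X,\mu)$ is available, but it somewhat defeats the purpose of the lemma, which is precisely to bootstrap from the more elementary monotone-convergence fact to the vector-valued dominated convergence theorem; your fuller second argument is the one that matches the paper's intent.
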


\begin{proof}
For $m,n = 1, 2,\dots $, define
$$
g_{m,n} = \max \{|f_m|,\dots ,|f_{m+n}|\}.
$$
Note that $g_{m,n}\in L^1(X,\mu,\RR)$ for all $m,n\in\NN$. For every $m \in \mathbb N$, the sequence 
$(g_{m,1}, g_{m,2},\dots )$  is non-decreasing and, since
$$
\left |\int g_{m,n} \right| = \int g_{m,n} \leq 
\int h < \infty,
$$
there is a function $g_m\in L^1(X,\mu,\RR)$ such that $g_{m,n} \rightarrow g_m$   a.e. as $n \rightarrow \infty$. 

The sequence $(g_n)$ is non-increasing and $g_n \rightarrow 0$ a.e., since $f_n \rightarrow 0$ a.e.. By the Monotone Convergence Theorem, $g_n \rightarrow 0$ i.n. and thus
$$
\int|f_n| \leq \int g_n \rightarrow 0. 
$$
\end{proof}

\begin{theorem}[The Dominated Convergence Theorem]\label{thm2.8.4}  If a sequence of functions $f_n\in L^1(X,\mu,\EE)$ converges almost everywhere to a function $f$ and there exists a function $h\in L^1(X,\mu,\RR)$ such that $|f_n| \leq h$ for every $n \in \mathbb N$, then 
$f\in L^1(X,\mu,\EE)$ and $f_n \rightarrow f$ i.n..
\end{theorem}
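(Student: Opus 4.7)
The plan is to show that $(f_n)$ is Cauchy in $\|\cdot\|_1$, use completeness of $L^1(X,\mu,\EE)$ to produce a norm limit $g$, and then identify $g$ with $f$ using the a.e.\ hypothesis. The key engine is Lemma \ref{DomiLemma}, which already delivers norm convergence to $0$ for dominated sequences tending to $0$ a.e.; the nontrivial reduction is to replay it with differences rather than with $f_n - f$ directly, since $f$ is not yet known to be integrable.

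I would establish the Cauchy property by contradiction. Suppose it fails; then there exist $\varepsilon > 0$ and indices $n_k, m_k \to \infty$ with $\|f_{n_k} - f_{m_k}\|_1 \geq \varepsilon$. Set $g_k = f_{n_k} - f_{m_k}$. Each $g_k$ lies in $L^1(X,\mu,\EE)$, and $|g_k| \leq |f_{n_k}| + |f_{m_k}| \leq 2h$ everywhere, while $g_k \to 0$ a.e.\ because $f_n \to f$ a.e. Lemma \ref{DomiLemma} applied to $(g_k)$ with dominating function $2h$ yields $\|g_k\|_1 \to 0$, contradicting $\|g_k\|_1 \geq \varepsilon$. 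Hence $(f_n)$ is Cauchy in $\|\cdot\|_1$.

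By Theorem \ref{thm2.8.1}, there exists $g \in L^1(X,\mu,\EE)$ with $f_n \to g$ i.n. By Theorem \ref{thm2.8.2}, some subsequence $(f_{p_n})$ converges to $g$ a.e. Since also $f_{p_n} \to f$ a.e.\ by hypothesis, Theorem \ref{thm2.7.4} gives $f = g$ a.e. This lets me conclude $f \in L^1(X,\mu,\EE)$ (it agrees a.e.\ with an integrable function, which is the convention tacitly used elsewhere in the paper, e.g.\ in the statement of Theorem \ref{thm2.7.3}), and $f_n \to f$ i.n.\ follows from Theorem \ref{thm2.7.3} applied to the convergence $f_n \to g$ i.n.\ together with $f = g$ a.e. The main obstacle is the Cauchy step itself, because Lemma \ref{DomiLemma} cannot be invoked on $f_n - f$ until integrability of $f$ is known; routing through $L^1$-valued differences $f_{n_k} - f_{m_k}$ is what makes the argument work.
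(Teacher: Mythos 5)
Your proposal is correct and follows essentially the same route as the paper: both reduce the problem to Lemma \ref{DomiLemma} applied to dominated differences of terms of the sequence (you argue Cauchyness by contradiction with pairs $f_{n_k}-f_{m_k}$, the paper uses consecutive differences along an arbitrary subsequence), then invoke completeness, Theorem \ref{thm2.8.2}, and Theorems \ref{thm2.7.3}--\ref{thm2.7.4} to identify the norm limit with $f$. Your explicit remark about why one cannot apply the lemma directly to $f_n - f$ is exactly the point the paper's proof implicitly navigates.
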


\begin{proof}  
If $(p_n)$ is an increasing sequence of positive integers, then
$$
h_n = f_{p_{n+1}}-f_{p_n} \rightarrow 0 \text{ a.e.}
$$
and $|h_n| \leq 2h$ for every $n \in \mathbb N$.  By Lemma \ref{DomiLemma}, $h_n \rightarrow 0$ i.n.. This shows that the sequence $(f_n)$ is a Cauchy sequence in
$L^1(X,\mu,\EE)$ and therefore it converges in norm to some
$g \in L^1(X,\mu,\EE)$, by Theorem \ref{thm2.8.1}.  On the other
hand, by Theorem \ref{thm2.8.2}, there exists an increasing sequence of positive integers $(q_n)$ such that $f_{q_n} \rightarrow g$ a.e. But
$f_{q_n} \rightarrow f$ a.e., and thus $g = f$ a.e..  This, in
view of Theorem \ref{thm2.7.3}, implies $f_n \rightarrow f$ i.n.. 
\end{proof}

\section{Integrals with values in locally convex spaces}

In tho section we extend the presented construction to functions with values in locally spaces.  As before, $X$ is a nonempty set with a ring of subsets $\RI$ and $\mu$ is a $\sigma$-additive measure $\RI$. Now $\EE$ is a complete locally convex space with the topology defined be a family of seminorms $\|\cdot\|_\alpha$ with $\alpha \in I$, where $I$ is an index set. If $f\in \SF(\EE)$ is a simple function and $\alpha\in I$, then the function $\|f\|_\alpha$ defined by $\|f\|_\alpha(x)=\|f(x)\|_\alpha$ is a simple real valued function, that is, $\|f\|_\alpha\in\SF(\RR)$.

For $\alpha \in I$, let $(\EE_\alpha, \|\cdot\|_\alpha)$ be the quotient normed space.  If $f: X \to \EE$, then by $\pi_\alpha f : X \to \EE_\alpha$ we will denote the composition of $f$ with the quotient map from $\EE$ to $\EE_\alpha$. Note that $\|f\|_\alpha=\|\pi_\alpha f\|_\alpha$.

 For $f\in \SF(\EE)$ and $\alpha\in I$ we define
$$
\vvvert f \vvvert_\alpha = \int \|f\|_\alpha.
$$
It is easy to see that $\vvvert \cdot \vvvert_\alpha$ is a semi-norm on $\SF(\EE)$. Note that $\int \|f\|_\alpha = \int \|\pi_\alpha f\|_\alpha$. From Lemma \ref{L1} we obtain the following useful inequality
$$
\left\| \int f \right\|_\alpha \leq \vvvert f \vvvert_\alpha
$$
for all $f \in \SF(\EE)$ and $\alpha\in I$.

\begin{definition}
A function $f: X \to \EE$ is called integrable if
there exist functions $f_n\in \SF(\EE)$ such that,  for every $\alpha\in I$,
\begin{enumerate}
	\item[$\mathbb{A}$] $\ds \sum_{n=1}^\infty \int \| f_n \|_\alpha < \infty$,
	\item[$\mathbb{B}$] If $\ds  \sum_{n=1}^\infty \|f_n(x)\|_\alpha< \infty$, then $\ds \lim_{m\to\infty} \left\| f(x)- \sum_{n=1}^mf_n(x)\right\|_\alpha = 0$.
\end{enumerate}
If conditions $\mathbb{A}$ and $\mathbb{B}$ are satisfied we will write 
\begin{equation*}
	f \simeq f_1+ f_2+\dots \quad \text{or} \quad f\simeq \sum_{n=1}^\infty f_n.
\end{equation*} 
The space of all integrable functions will be denoted by $L^1(X,\mu, \EE)$.
\end{definition}

\begin{lemma}\label{Bochner} If $f \simeq f_1+ f_2+\dots$, then $\pi_\alpha f \simeq \pi_\alpha f_1+ \pi_\alpha f_2+\dots$ for every $\alpha \in I$. Consequently, if $f: X \to \EE$ is integrable, then $\pi_\alpha f$ is Bochner inetgrable for every $\alpha \in I$.
\end{lemma}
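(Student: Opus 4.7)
The plan is to directly verify that the sequence $(\pi_\alpha f_n)$ witnesses the Bochner integrability of $\pi_\alpha f$ in the normed space $(\EE_\alpha, \|\cdot\|_\alpha)$, by checking conditions $\mathbb{A}$ and $\mathbb{B}$ from the definition in Section~3. The whole argument rests on two elementary facts about the quotient map $\pi_\alpha\colon \EE \to \EE_\alpha$: it is linear, and it satisfies $\|\pi_\alpha v\|_\alpha = \|v\|_\alpha$ for every $v\in \EE$ (which is how the quotient norm is defined, and is recorded in the paragraph preceding the definition). In particular, each $\pi_\alpha f_n$ is a simple $\EE_\alpha$-valued function: if $f_n = v_1 A_1 + \dots + v_k A_k$, then $\pi_\alpha f_n = \pi_\alpha v_1\, A_1 + \dots + \pi_\alpha v_k\, A_k \in \SF(\EE_\alpha)$.

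For condition $\mathbb{A}$, I would note that
$$\|\pi_\alpha f_n\|_1 \;=\; \int \|\pi_\alpha f_n\|_\alpha \;=\; \int \|f_n\|_\alpha,$$
so hypothesis $\mathbb{A}$ of the locally convex definition immediately yields $\sum_n \|\pi_\alpha f_n\|_1 < \infty$. For condition $\mathbb{B}$, fix $x\in X$ with $\sum_n \|\pi_\alpha f_n(x)\|_\alpha < \infty$; the norm identity rewrites this as $\sum_n \|f_n(x)\|_\alpha < \infty$, so hypothesis $\mathbb{B}$ of the locally convex definition applied to $f$ yields $\|f(x) - \sum_{n=1}^m f_n(x)\|_\alpha \to 0$. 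Applying $\pi_\alpha$ and using its linearity together with the norm identity a second time,
$$\left\|\pi_\alpha f(x) - \sum_{n=1}^m \pi_\alpha f_n(x)\right\|_\alpha \;=\; \left\|\pi_\alpha\!\left(f(x) - \sum_{n=1}^m f_n(x)\right)\right\|_\alpha \;=\; \left\|f(x) - \sum_{n=1}^m f_n(x)\right\|_\alpha \;\to\; 0,$$
which is precisely the statement that $\pi_\alpha f(x) = \sum_{n=1}^\infty \pi_\alpha f_n(x)$ in $\EE_\alpha$. This establishes $\pi_\alpha f \simeq \pi_\alpha f_1 + \pi_\alpha f_2 + \dots$ in the Bochner sense, and the ``consequently'' clause is then immediate from the definition of Bochner integrability applied inside $\EE_\alpha$.

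There is no substantive obstacle here; the lemma is essentially a translation between the two notions of integrability, and the only nontrivial input is that the quotient norm on $\EE_\alpha$ reproduces the seminorm $\|\cdot\|_\alpha$ on $\EE$. The one bookkeeping point worth flagging is that the Bochner construction of Section~3 was phrased for a Banach space, whereas $\EE_\alpha$ is only a normed space; this is harmless because the definition of simple function, of $\|\cdot\|_1$, and of the conditions $\mathbb{A}$ and $\mathbb{B}$ are meaningful verbatim in any normed space, and completeness of the target is not used in the verification above.
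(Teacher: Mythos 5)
Your proof is correct and follows essentially the same route as the paper's: both arguments simply transport conditions $\mathbb{A}$ and $\mathbb{B}$ through the quotient map using its linearity and the identity $\|\pi_\alpha v\|_\alpha = \|v\|_\alpha$. Your version is somewhat more explicit (in particular about $\pi_\alpha f_n \in \SF(\EE_\alpha)$ and about $\EE_\alpha$ being merely a normed space), but there is no substantive difference.
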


\begin{proof} Let $\alpha \in I$.
If $f: X \to \EE$ is integrable, then
there exist functions $f_n\in \SF(\EE)$ such that $ \sum_{n=1}^\infty \int \| f_n \|_\alpha < \infty$ and such that $ \lim_{m\to\infty} \left\| f(x)- \sum_{n=1}^mf_n(x)\right\|_\alpha = 0$ at those points $x\in X$ where $ \sum_{n=1}^\infty \|f_n(x)\|_\alpha< \infty$. Then $\pi_\alpha f_n\in \SF(\EE_\alpha)$, $ \sum_{n=1}^\infty \int \| \pi_\alpha f_n \|_\alpha < \infty$, and $ \pi_\alpha f(x)= \sum_{n=1}^\infty \pi_\alpha f_n(x)$ at those points $x\in X$ where $ \sum_{n=1}^\infty \|\pi_\alpha f_n(x)\|_\alpha< \infty$.  Consequently, $\pi_\alpha f \simeq \pi_\alpha f_1+ \pi_\alpha f_2+\dots$ and $\pi_\alpha f$ is inetgrable.
\end{proof}

\noindent{\bf Question 1:} If $f: X \to \EE$ and $\pi_\alpha f$ is Bochner inetgrable for every $\alpha \in I$, is $f$ integrable?

\begin{theorem}\label{Th||f||} Let $f \simeq f_1+ f_2+\dots$.  Then, for every $\alpha \in I$, we have 
\begin{itemize}
 \item[{\rm (a)}] $\|f\|_\alpha\in L^1(X,\mu)$,
 \item[{\rm (b)}] $\ds \int \|f\|_\alpha = \lim_{n\to \infty} \int \|f_1+ \dots + f_n\|_\alpha$,
 \item[{\rm (c)}] $\ds \left\| \int f_1+ \int f_2+\dots \right\|_\alpha \leq \int \|f\|_\alpha$.
\end{itemize}
\end{theorem}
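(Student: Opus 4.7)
The plan is to reduce everything to the Banach-space case via the quotient maps $\pi_\alpha:\EE\to\EE_\alpha$. Fix $\alpha\in I$. By Lemma \ref{Bochner}, the expansion $f\simeq f_1+f_2+\dots$ in $\EE$ yields a Bochner expansion $\pi_\alpha f\simeq \pi_\alpha f_1+\pi_\alpha f_2+\dots$ in the Banach space $(\EE_\alpha,\|\cdot\|_\alpha)$. I will then apply parts (a), (b), (c) of Theorem \ref{Th|f|} to this new expansion and translate the conclusions back.

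The key translation is pointwise: since $\|\pi_\alpha v\|_\alpha=\|v\|_\alpha$ for every $v\in\EE$, the real-valued function $|\pi_\alpha f|$ coincides with $\|f\|_\alpha$, and likewise $|\pi_\alpha(f_1+\dots+f_n)|$ coincides with $\|f_1+\dots+f_n\|_\alpha$. Part (a) of Theorem \ref{Th|f|} immediately yields $\|f\|_\alpha\in L^1(X,\mu)$, and part (b) delivers
$$
\int \|f\|_\alpha = \lim_{n\to\infty}\int \|f_1+\dots+f_n\|_\alpha,
$$
which proves (a) and (b).

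For (c), I need that the simple-function integral commutes with $\pi_\alpha$: for $g=v_1A_1+\dots+v_kA_k\in\SF(\EE)$ linearity of the quotient map gives $\pi_\alpha\!\int g=\pi_\alpha(v_1)\mu(A_1)+\dots+\pi_\alpha(v_k)\mu(A_k)=\int\pi_\alpha g$. Together with $\|\pi_\alpha w\|_\alpha=\|w\|_\alpha$, this yields $\|\int f_n\|_\alpha=\|\int\pi_\alpha f_n\|_\alpha$. The series $\sum\int f_n$ is absolutely summable in the seminorm $\|\cdot\|_\alpha$ because $\|\int f_n\|_\alpha\leq \vvvert f_n\vvvert_\alpha$ and condition $\mathbb{A}$ holds; by completeness of $\EE$, summing simultaneously over all seminorms, the left-hand side of (c) is a well-defined element of $\EE$. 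Applying part (c) of Theorem \ref{Th|f|} inside $\EE_\alpha$ gives
$$
\Bigl\|\sum_{n=1}^\infty\int\pi_\alpha f_n\Bigr\|_\alpha \leq \int|\pi_\alpha f| = \int\|f\|_\alpha,
$$
and pulling $\pi_\alpha$ outside the sum on the left recovers $\|\sum\int f_n\|_\alpha$, completing (c).

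Since the heavy lifting is performed by Lemma \ref{Bochner} and Theorem \ref{Th|f|}, there is no serious obstacle. The only two small points that require verification are the intertwining $\pi_\alpha\!\int=\int\pi_\alpha$ on simple functions and the pointwise identity $|\pi_\alpha f|=\|f\|_\alpha$; both are immediate from the definitions.
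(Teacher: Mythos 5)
Your proposal is correct and follows essentially the same route as the paper: reduce to the Banach-space case via Lemma \ref{Bochner}, use the identity $\|f\|_\alpha=\|\pi_\alpha f\|_\alpha$ to transfer parts (a) and (b) of Theorem \ref{Th|f|}, and obtain (c) from the intertwining $\pi_\alpha\int=\int\pi_\alpha$ on simple functions together with the partial-sum estimate. The paper writes out the partial-sum inequality for (c) directly rather than citing Theorem \ref{Th|f|}(c) in $\EE_\alpha$, but this is the same argument.
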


\begin{proof}
 Let $f \simeq f_1+ f_2+\dots$ and $\alpha \in I$.  Since $\|f\|_\alpha=\|\pi_\alpha f\|_\alpha$, we obtain $\|f\|_\alpha\in L^1(X,\mu)$ by Lemma \ref{Bochner}.  Moreover, since
$$
\pi_\alpha f \simeq \pi_\alpha f_1+ \pi_\alpha f_2+\dots,
$$
we have
$$
\int \| f\|_\alpha = \int \|\pi_\alpha f\|_\alpha = \lim_{n\to \infty} \int \|\pi_\alpha f_1+ \dots + \pi_\alpha f_n\|_\alpha = \lim_{n\to \infty} \int \|f_1+ \dots + f_n\|_\alpha.
$$
Finally, since
$$
\left\|\sum_{n=1}^m \int f_n\right\|_\alpha = \left\| \int \sum_{n=1}^m \pi_\alpha f_n\right\|_\alpha \leq \int \left\|\sum_{n=1}^m \pi_\alpha f_n\right\|_\alpha = \int \left\|\sum_{n=1}^m f_n\right\|_\alpha,
$$
we have
$$
\left\| \int f_1+ \int f_2+\dots \right\|_\alpha \leq \lim_{n\to \infty} \int \|f_1+ \dots + f_n\|_\alpha = \int \|f\|_\alpha.
$$
\end{proof}

\begin{corollary}
 If 
 $$
 f \simeq f_1+ f_2+\dots \quad \text{and} \quad f \simeq g_1+ g_2+\dots ,
 $$
 then
 $$
\int f_1+ \int f_2+\dots = \int g_1+ \int g_2+\dots .
 $$
\end{corollary}

\begin{proof}
 If $f \simeq f_1+ f_2+\dots $ and $ f \simeq g_1+ g_2+\dots$, then
 $$
 0 \simeq f_1-g_1+f_2-g_2+ \dots
 $$
 and hence
 $$
 \left\|  \int f_1- \int g_1+  \int f_2- \int g_2 +\dots \right\|_\alpha \leq 0
 $$
 for every $\alpha \in I$.
\end{proof}

\begin{definition}
 By the integral of an integrable function
$$
f \simeq  f_1 + f_2 + \cdots
$$
we mean the element of $\EE$ defined by
$$
\int{f} = \int{f_1} +  \int{f_2} + \cdots .
$$
\end{definition}

For $f\in L^1(X,\mu, \EE)$ and $\alpha \in I$ we can define $\vvvert f \vvvert_\alpha = \int \|f\|_\alpha$. These seminorms induce a locally convex topology on $L^1(X,\mu, \EE)$.

\begin{theorem}
 The integral is a linear operator from $L^1(X,\mu, \EE)$ to $\EE$.  Moreover, $\left\| \int f \right\|_\alpha \leq \int \|f\|_\alpha$ for all $f \in L^1(X,\mu, \EE)$ and $\alpha \in I$. 
\end{theorem}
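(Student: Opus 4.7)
The plan is to mimic the proof of the analogous Banach-space result, checking that linearity and the seminorm inequality both pass through the new family of seminorms $\{\|\cdot\|_\alpha : \alpha \in I\}$, with Theorem \ref{Th||f||} doing the heavy lifting.

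First I would establish linearity. Suppose $f \simeq f_1 + f_2 + \cdots$ and $g \simeq g_1 + g_2 + \cdots$, and fix $\lambda \in \RR$. I claim that
$$
f + g \simeq f_1 + g_1 + f_2 + g_2 + \cdots \quad \text{and} \quad \lambda f \simeq \lambda f_1 + \lambda f_2 + \cdots.
$$
To verify condition $\mathbb{A}$ for each $\alpha \in I$, I would use the triangle inequality for the seminorm $\|\cdot\|_\alpha$ inside the integral together with the hypotheses $\sum \int \|f_n\|_\alpha < \infty$ and $\sum \int \|g_n\|_\alpha < \infty$ (and positive homogeneity for the scalar multiple case). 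For condition $\mathbb{B}$, if $\sum_n (\|f_n(x)\|_\alpha + \|g_n(x)\|_\alpha) < \infty$ at some $x$, then both series converge absolutely in the $\alpha$-seminorm, and the assumption that the partial sums of $f_n$ and $g_n$ converge to $f(x)$ and $g(x)$ respectively in $\|\cdot\|_\alpha$ yields, by the triangle inequality, that the interleaved partial sums converge to $f(x)+g(x)$ in $\|\cdot\|_\alpha$; the scalar case is immediate. Once the two expansions are in hand, linearity of the integral follows from the definition $\int h = \sum_n \int h_n$ and the already established linearity of $\int$ on $\SF(\EE)$.

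Next I would handle the seminorm inequality. By definition, $\int f = \int f_1 + \int f_2 + \cdots$ for any expansion $f \simeq f_1 + f_2 + \cdots$ with simple functions $f_n$. Applying part (c) of Theorem \ref{Th||f||} directly gives
$$
\left\| \int f \right\|_\alpha = \left\| \int f_1 + \int f_2 + \cdots \right\|_\alpha \leq \int \|f\|_\alpha
$$
for every $\alpha \in I$, which is exactly the inequality claimed. Since the right-hand side equals $\vvvert f \vvvert_\alpha$, this also shows that the integral is continuous with respect to the locally convex topology defined by the seminorms $\vvvert \cdot \vvvert_\alpha$.

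No step here is especially delicate; the only point that deserves care is verifying condition $\mathbb{B}$ for the interleaved series $f_1 + g_1 + f_2 + g_2 + \cdots$, since now $\mathbb{B}$ is phrased as convergence of partial sums in each seminorm $\|\cdot\|_\alpha$ (rather than pointwise equality of a sum), so one must argue per $\alpha$ and interleave correctly. Everything else is routine once Theorem \ref{Th||f||} is invoked.
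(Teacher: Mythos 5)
Your proposal is correct and follows essentially the same route as the paper: linearity via the interleaved expansion $f+g \simeq f_1+g_1+f_2+g_2+\cdots$ and the scaled expansion $\lambda f \simeq \lambda f_1 + \lambda f_2 + \cdots$, and the seminorm inequality by invoking part (c) of Theorem \ref{Th||f||}. You simply spell out the per-$\alpha$ verification of conditions $\mathbb{A}$ and $\mathbb{B}$ that the paper leaves implicit (and you correctly point to Theorem \ref{Th||f||}, where the paper's proof cites the Banach-space Theorem \ref{Th|f|}, apparently a leftover from the earlier section).
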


\begin{proof} Linearity follows easily from the fact that, if  $f \simeq  f_1 + f_2 + \cdots$, $g \simeq  g_1 + g_2 + \cdots$, and $\lambda\in\RR$, then
$$
f+g \simeq  f_1 +g_1 + f_2 + g_2\cdots \quad \text{and} \quad \lambda f \simeq  \lambda f_1 + \lambda f_2 + \cdots .
$$
The inequality follows from part (c) of Theorem \ref {Th|f|}.
\end{proof}

\noindent{\bf Question 2:} Is $L^1(X,\mu, \EE)$ complete?

\vspace{3mm}

Finally we show  Gelfand-Pettis integrability (\cite{Gelfand} and \cite{Pettis}) of elements of $L^1(X,\mu, \EE)$.

\begin{theorem}
 Let $\Lambda$ be a bounded linear functional on $\EE$. If $f\in L^1(X,\mu, \EE)$, then $\Lambda  f \in L^1(X,\mu, \RR)$ and
 $$
 \Lambda \int f = \int \Lambda  f.
 $$
\end{theorem}

\begin{proof}
 If $\Lambda$ is a bounded linear functional on $\EE$, then
 $$
 |\Lambda v| \leq M (\|v\|_{\alpha_1}+\dots+\|v\|_{\alpha_k})
 $$
 for some $\alpha_1,\dots,\alpha_k\in I$ and some constant $M$ and all $v\in \EE$. Let
 $$
 p(v)=M (\|v\|_{\alpha_1}+\dots+\|v\|_{\alpha_k}).
 $$
 
 If $f=v_1A_1+\dots+v_nA_n$ for some $v_1, \dots , v_n \in \EE$ and disjoint $A_1, \dots , A_n \in \RI$, then
\begin{align*}
 \Lambda \int f &= \Lambda \left(\int (v_1A_1+\dots+v_nA_n)\right)\\
 &= \Lambda \left(v_1\mu(A_1)+\dots+v_n\mu(A_n)\right)\\
 &= \Lambda v_1 \mu(A_1)+\dots+\Lambda v_n \mu(A_n)\\
 &= \int \left(\Lambda v_1 A_1+\dots+\Lambda v_n A_n\right)=\int \Lambda f
\end{align*}
and
\begin{align*}
 \| \Lambda  f\|_1 &= \int |\Lambda (v_1A_1+\dots+v_nA_n)|\\
&= |\Lambda v_1| \mu(A_1)+\dots+|\Lambda v_n| \mu(A_n)\\
&\leq p( v_1) \mu(A_1)+\dots+ p(v_n) \mu(A_n)=  \int p f,
\end{align*}
where by $pf$ we mean the composition of $p$ and $f$.

Now let
 $$
f \simeq  f_1 + f_2 + \cdots.
$$
We will show that
\begin{equation}\label{lambdazp}
 \Lambda  f \simeq  \Lambda  f_1 +p f_1 - p  f_1 + \Lambda  f_2+ p f_2-p f_2+ \cdots.
\end{equation}
Indeed, we have
\begin{align*}
 \|\Lambda  f_1\|_1 +2\|p f_1\|_1  &+ \|\Lambda  f_2\|_1+ 2\|p f_2\|_1 + \cdots\\
 &\leq 3\|p f_1\|_1  + 3\|p f_2\|_1+ \cdots\\
 &= M\left( \sum_{n=1}^\infty \|f_n\|_{\alpha_1}+\dots+\sum_{n=1}^\infty\|f_n\|_{\alpha_k} \right) <\infty
\end{align*}
and, if 
$$
|\Lambda f_1(x)| + 2|pf_1(x)|+|\Lambda f_2(x)| +2|pf_2(x)|+ \cdots < \infty
$$
for some $x\in X$, then
$$
\sum_{n=1}^\infty \|f_n(x)\|_{\alpha_j}< \infty,
$$ 
for $j=1,\dots,k$ and thus
$$
\lim_{m\to\infty} \left\| f(x)- \sum_{n=1}^mf_n(x)\right\|_{\alpha_j} = 0
$$
for $j=1,\dots,k$. Hence
$$
\lim_{m\to\infty} p \left( f(x)- \sum_{n=1}^mf_n(x)\right) = 0
$$
and consequently
$$
\lim_{m\to\infty}  \left( \Lambda f(x)- \sum_{n=1}^m \Lambda f_n(x)\right) = 0
$$
or
$$
\Lambda f(x)=\sum_{n=1}^\infty \Lambda f_n(x)
$$
for that $x\in X$.

From \eqref{lambdazp} we get both $\Lambda  f \in L^1(X,\mu, \RR)$ and $\Lambda \int f = \int \Lambda  f$.

\end{proof}

\end{document}